\numberwithin{equation}{section}
\title{Asymptotic M\"untz-Sz\'asz Theorems}
\newcommand{\mc}{M\raise.45ex\hbox{c}Carthy}
\author{Jim Agler
and
John E. M\raise.45ex\hbox{c}Carthy
\thanks{Partially supported by National Science Foundation Grant  
DMS 2054199}
}
\def\d{\mathbb{D}}
\def\t{\mathbb{T}}
\def\h{\mathcal{H}}
\def\a{\mathcal{A}}
\def\be{\begin{equation}}
\def\ee{\end{equation}}
\def\m{\mathcal{M}}
\def\n{\mathcal{N}}
\def\c{\mathbb{C}}
\def\d{\mathbb{D}}
\def\t{\mathbb{T}}
\def\s{\mathbb{S}}
\def\r{\mathbb{R}}
\def\h{\mathcal{H}}
\def\ltwo{\ell^2}
\def\set#1#2{\{ #1 \, | \, #2\}}
\def\htwo{{\rm H}^2}
\def\ltwo{{\rm L}^2([0,1])}
\def\ltworho{{\rm L}^2 ([\rho,1])}
\def\ltwoinf{{\rm L}^2(0,\infty)}
\def\a{\mathcal{A}}
\def\C{\mathscr{C}}
\def\lap#1{\mathcal{L} \{#1\}}
\def\ip#1#2{\langle\, #1\, ,#2\, \rangle}
\def\b{\mathcal{B}}
\newcommand{\twopartdef}[4]
{
	\left\{
		\begin{array}{ll}
			#1 & \mbox{if } #2 \\
			#3 & \mbox{if } #4
		\end{array}
	\right.
}
\DeclareMathOperator{\ran}{ran}
\DeclareMathOperator{\re}{Re}
\DeclareMathOperator{\dist}{dist}
\renewcommand\={\ =\ }
\newcommand\D{{\mathbb D}}
\newcommand\bt{\begin{thm}}
\newcommand\et{\end{thm}}
\newcommand\beq{\begin{eqnarray*}}
\newcommand\eeq{\end{eqnarray*}}
\newcommand\bp{\begin{proof}}
\newcommand\ep{\end{proof}}
\newcommand\la{\langle}
\newcommand\ra{\rangle}
\newcommand\ej{Erd\'elyi-Johnson\ }
\newcommand\A{{\mathcal A}}
\renewcommand\i{\infty}
\renewcommand\Re{{\mathrm Re\,}}
\renewcommand\wp{w^\prime}
\begin{document}

\bibliographystyle{plain}
\theoremstyle{definition}
\newtheorem{defin}[equation]{Definition}
\newtheorem{lem}[equation]{Lemma}
\newtheorem{lemma}[equation]{Lemma}
\newtheorem{prop}[equation]{Proposition}
\newtheorem{thm}[equation]{Theorem}
\newtheorem{theorem}[equation]{Theorem}
\newtheorem{claim}[equation]{Claim}
\newtheorem{ques}[equation]{Question}
\newtheorem{fact}[equation]{Fact}
\newtheorem{axiom}[equation]{Technical Axiom}
\newtheorem{newaxiom}[equation]{New Technical Axiom}
\newtheorem{cor}[equation]{Corollary}
\newtheorem{exam}[equation]{Example}
\newtheorem{rem}[equation]{Remark}
\newtheorem{problem}[equation]{Open Problem}
\maketitle
%%%%%%%%%%%%%
\begin{abstract}
We define a monomial space to be a subspace of $\ltwo$ that can be approximated by spaces that are spanned by monomial functions. 
We describe the structure of monomial spaces.
\end{abstract}

%%%%%%%%%%%%%
\section{Introduction}
What sorts of subspaces in $\ltwo$ can be limits of spans of monomials?
Specifically, let
\[
\mathbb{S}=\set{s\in \c}{\re s > -\frac{1}{2}},
\]
so that $s\in \s$ if and only if $x^s \in \ltwo$. For $S$ a finite subset of $\s$ we let $\m(S)$ denote the span in $\ltwo$ of the monomials whose exponents lie in $S$, i.e.,
\[
\m(S)=\set{\sum_{s\in S}a(s)x^s}{a:S\to \c}.
\]
We refer to sets in $\ltwo$ that have the form $\m(S)$ for some finite subset $S$ of $\s$ as \emph{finite monomial spaces}.
We are interested in what the limits of such spaces are.
\begin{defin}
\label{defin1}
If $\m$ is a subspace of a Hilbert space $\h$  and $\{\m_n\}$ is a sequence of closed subspaces, we say that \emph{$\{\m_n\}$ tends to $\m$} and write
\[
\m_n \to \m\ \text{ as }\ n\to \infty
\]
if
\[
\m = \set{f \in \h}{\lim_{n \to \infty} \dist(f,\m_n)=0}.
\]
\end{defin}
There are alternative ways to frame this definition; see Proposition \ref{proppf1}.

\begin{defin}
We say that a subspace $\m$ of $\ltwo$ is a \emph{monomial space} if there  exists a sequence $\{\m_n\}$ of finite monomial spaces such that $\m_n \to \m$.
\end{defin}
The goal of this paper is to study monomial spaces, which have a rich structure and are intimately related to the M\"untz-Szasz theorem and generalizations thereof.

\subsection{Monotone Monomial Spaces}
In this subsection we recall several classical results that can be interpreted as facts about monomial spaces. In each example we consider there is a limit $\m_n \to \m$ of finite monomial spaces that is monotone, i.e.,
\[
\m_i \subseteq \m_j\ \ \text{ whenever }\ \ i\le j.
\]
For a detailed account of the results in this section, see \cite{alm07} and \cite{boer95}.
\begin{exam}{\bf The Weierstrass Approximation Theorem}
Let $S_n = \{0, 1,2,\ldots,n\}$. The Weierstrass Theorem, which implies that the polynomials are dense in $\ltwo$ implies that
\[
\m(S_n) \to \ltwo.
\]
In particular, $\ltwo$ is a monomial space.
\end{exam}
\begin{exam}\label{int.exam.10}{\bf Classical M\"untz-Sz\'asz Theorem}
This was proved in \cite{mu14,sza16}.
Fix a strictly increasing sequence of nonnegative integers $s_0,s_1,s_2,\ldots,$ and let
\[
S_n = \{s_0,s_1,\ldots,s_n\}.
\]
Then there exists a space $\m$ such that
\[
S(\m_n) \to \m.
\]
Furthermore,
\[
\m = \ltwo\ \ \  \text{ if and only if }\ \ \ \sum_{k=1}^\infty \frac{1}{s_k} =\infty.
\]
\end{exam}
\begin{exam}{\bf 
Sz\'asz's Theorem (real case), (a.k.a Full M\"untz-Sz\'asz Theorem in ${\bf \ltwo}$)}.
This was proved in \cite{sza53}.
Fix a sequence of distinct real numbers $s_0,s_1,s_2,\ldots,$ in $\s$ and let
\[
S_n = \{s_0,s_1,\ldots,s_n\}.
\]
Then there exists a space $\m$ such that
\[
\m(S_n) \to \m.
\]
Furthermore,
\[
\m = \ltwo\ \ \  \text{ if and only if }\ \ \ \sum_{k=0}^\infty \frac{2 s_k +1}{(2s_k+1)^2 +1} =\infty.
\]
\end{exam}

\begin{exam}\label{int.exam.20}{\bf Sz\'asz's Theorem (complex case)}
Any of the proofs known to the authors of the previous example, including Sz\'asz's original proof, can be adapted to show that if $s_0,s_1,s_2,\ldots,$ is a sequence of distinct points in $\s$ and
\[
S_n = \{s_0,s_1,\ldots,s_n\},
\]
then there exists a space $\m$ such that
\[
\m(S_n) \to \m,
\]
and where,
\[
\m = \ltwo\ \ \  \text{ if and only if }\ \ \ \sum_{k=1}^\infty \frac{2 \re s_k +1}{|s_k +1 |^2} =\infty.
\]
\end{exam}
What happens in the above examples when $\m \not= \ltwo$?
\begin{exam}
{\bf The Clarkson-Erd\H{o}s Theorem.}
%\textcolor{red}{how does this analytic extension work in the case of complex exponents?}\\ \\
With the setup of Example \ref{int.exam.10}, assume that $\m \not= \ltwo$. Then Clarkson and Erd\H{o}s proved
in \cite{ce43} that  the elements of $\m$ extend to be analytic on $\d$! Furthermore, if $f \in \m$, then $f$ has a power series representation of the form
\[
f(z) = \sum_{k=0}^\infty a_k z^{s_k},\qquad z\in \d.
\]
This result was generalized to arbitrary real powers in $(-\frac{1}{2}, \infty)$ by Erd\'elyi and Johnson \cite{ej01}, 
who showed that if $\m \not= \ltwo$, then every $f$ in $\m$ is analytic in ${\mathbb D} \setminus (-1,0]$
\end{exam}
In honor of this remarkable theorem we introduce the following definition.
\begin{defin}
We say that \emph{$\m$ is a Clarkson-Erd\H{o}s space} if there exist a sequence $\{s_0,s_1,\ldots\}$ in $\s$
of distinct points
such that
\[
\m(\{s_0,s_1,\ldots,s_n\})\to \m
\]
where $\m \not= \ltwo$.
\end{defin}
We want to allow for multiplicities.
If an entry $s$ is repeated in a sequence, this corresponds to multiplicity in the following way.
The first occurrence of $s$ in $S_n$ gives the function $x^s$ in $\m_n$. The second occurrence
gives $\frac{\partial}{\partial s} x^s =  x^s \log x$. If $s$ occurs $k$ times, then $\m_n$ contains
the functions $ x^s,  x^s \log x, \dots ,  x^s (\log x)^{k-1}$.
This leads to the following generalization of a  Clarkson-Erd\H{o}s space.
\begin{defin}
We say that \emph{$\m$ is an  \ej space} if there exist a sequence $\{s_0,s_1,\ldots\}$ in $\s$, with multiplicities allowed,
such that
\[
\m(\{s_0,s_1,\ldots,s_n\})\to \m
\]
where $\m \not= \ltwo$.
\end{defin}

\subsection{A Non-monotone Monomial Space}
In the study of monomial spaces it is natural to consider the class of \emph{monomial operators}, i.e., the class of bounded operators $T$ acting on $\ltwo$ that take monomials to monomials, i.e.,
\be\label{int.10}
\forall_{s \in \s}\ \ \exists_{\tau \in \s}\ \ \exists_{c \in \c}\ \  Tx^s = cx^\tau.
\ee
In \cite{amHI} the authors studied the special case  wherein it is assumed that there exists a fixed number $m$ such that in \eqref{int.10} $\tau$ can be chosen to equal $s+m$ for all $s$. We call these {\em flat monomial operators.} 
In the course of proving that flat monomial operators leave ${\rm L}^2 ([a,1])$ invariant for each $a \in [0,1]$, the authors discovered the following example.
\begin{exam}
Fix $\rho \in [0,1]$ and choose an increasing sequence of integers $N_1,N_2,\ldots$ such that
\[
\lim_{n\to \infty} \frac{n}{n+N_n} = \sqrt\rho.
\]
If
\[
S_n = \{n+1,n+2,\ldots,n+N_n\},
\]
then
\[
\m(S_n) \to {\rm L}^2 ([\rho,1]).
\]
In particular, $\ltworho$ is a monomial space for each $\rho \in [0,1]$, where here and afterwards
we identify $\ltworho$ with the subspace of $\ltwo$ consisting of functions that vanish a.e. on $[0,\rho]$.
\end{exam}

\subsection{Characterization of monomial spaces}

The Hardy operator $H : \ltwo \to \ltwo$ is defined by
\be
\label{eqin2}
H f(x) \ =\ \frac{1}{x} \int_0^x f(t) dt .
\ee
This was introduced by Hardy in \cite{har20}, where he proved it was bounded.
As $H x^s = \frac{1}{s+1} x^s$ for all $s \in \s$, the Hardy operator leaves invariant every monomial space.
The converse is true.
\begin{thm}
\label{thmbeur}
A closed subspace of $\ltwo$ is a monomial space if and only if it is invariant for $H$.
\end{thm}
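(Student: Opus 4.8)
The plan is to prove the two implications separately; the forward implication is routine and the reverse one carries all the content. For the forward direction, note that $H x^s = \frac{1}{s+1} x^s$, so every finite monomial space is spanned by eigenvectors of $H$ and is therefore $H$-invariant (and even with multiplicities $H$ sends $x^s(\log x)^j$ into the span of $x^s,\dots,x^s(\log x)^j$, so the conclusion persists). If $\m_n \to \m$ with each $\m_n$ invariant and $f \in \m$, I would pick $g_n \in \m_n$ with $\|f - g_n\| \to 0$; then $Hg_n \in \m_n$ and $\|Hf - Hg_n\| \le \|H\|\,\|f - g_n\| \to 0$, so $\dist(Hf,\m_n) \to 0$ and hence $Hf \in \m$ by Definition \ref{defin1}. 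Thus every monomial space is $H$-invariant.

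For the converse I would pass to a concrete model in which $H$ becomes a function of the backward shift. The substitution $x = e^{-u}$ with the normalization $(Wf)(u) = f(e^{-u})e^{-u/2}$ gives a unitary $\ltwo \to {\rm L}^2(0,\infty)$ carrying $x^s$ to $e^{-(s+1/2)u}$; composing with the Paley--Wiener (Laplace) transform and a Cayley transform yields a unitary $U : \ltwo \to \htwo(\d)$ sending each monomial $x^s$ to a nonzero multiple of a reproducing kernel $k_{\mu(s)}$, where $s \mapsto \mu(s)$ is a bijection of $\s$ onto $\d$. A direct eigenvalue computation shows $UHU^\ast$ is a M\"obius function of the backward shift $M_w^\ast$ (in the half-plane coordinates $H = (M_z^\ast + \tfrac12)^{-1}$), so $H$ and $M_w^\ast$ have the same lattice of closed invariant subspaces. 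Under $U$ a finite monomial space $\m(S)$ goes to $\spn\{k_{\mu(s)} : s \in S\}$, and, accounting for multiplicities by derivatives of kernels, these are exactly the finite-dimensional model spaces $K_B = \htwo \ominus B\htwo$ for finite Blaschke products $B$; since $\mu$ is onto $\d$, every finite $B$ arises this way.

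By the Beurling--Lax theorem the closed $M_w^\ast$-invariant subspaces are precisely $\htwo$ and the model spaces $K_\Theta$ for inner $\Theta$, so it remains to realize each as a limit of finite model spaces. Given inner $\Theta$, Carath\'eodory's theorem supplies finite Blaschke products $B_n \to \Theta$ uniformly on compacta (the whole space is handled directly by $B_n = w^n$). The key lemma is $K_{B_n} \to K_\Theta$ in the sense of Definition \ref{defin1}, which I would deduce from strong convergence $P_{K_{B_n}} \to P_{K_\Theta}$ of the orthogonal projections: once this holds, for $f \in K_\Theta$ one gets $\dist(f,K_{B_n}) = \|f - P_{K_{B_n}}f\| \to \|f - P_{K_\Theta}f\| = 0$, while $\dist(f,K_{B_n}) \to 0$ forces $P_{K_{B_n}}f \to f$, hence $f = P_{K_\Theta}f \in K_\Theta$.

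To establish the strong convergence, write $P_{K_{B_n}} = I - M_{B_n}M_{B_n}^\ast$; the crux is the single estimate that $\int_\t |B_n - \Theta|^2 |h|^2 \to 0$ for every $h \in \htwo$. Since $|B_n| = |\Theta| = 1$ a.e. on $\t$, this integral equals $2\|h\|^2 - 2\re \int_\t B_n \overline{\Theta}\,|h|^2$, and bounded local uniform convergence of $B_n$ to $\Theta$ gives weak-$\ast$ convergence in ${\rm H}^\infty$, so $\int_\t B_n \overline{\Theta}\,|h|^2 \to \int_\t |h|^2$ and the estimate follows. This one bound yields $M_{B_n} \to M_\Theta$ and $M_{B_n}^\ast \to M_\Theta^\ast$ strongly, hence $P_{K_{B_n}} \to P_{K_\Theta}$ strongly, and transporting back through $U$ exhibits $\m$ as a limit of finite monomial spaces. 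I expect the main obstacle to be the second paragraph --- rigorously establishing the unitary model in which $H$ is a M\"obius function of the backward shift and monomials become reproducing kernels --- since once the problem is cast in the shift model, Beurling--Lax describes the invariant subspaces and the density of finite Blaschke products does the rest.
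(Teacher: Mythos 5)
Your argument is correct in substance, but note how it sits relative to the paper: the paper never writes out a proof of Theorem \ref{thmbeur} at all --- it cites an external ``real analysis'' proof in \cite{amBT} --- and what you have produced is a complete execution of the Hardy-space route that the paper only sketches in Subsection \ref{subsecpt}. Every tool you invoke is available in the paper. Your unitary sending $x^s$ to a multiple of a Szeg\H{o} kernel is the Sarason transform (Definition \ref{defst}, Lemma \ref{sa.lem.10}, under which $x^s \mapsto \frac{1}{s+1}k_{\bar s/(\bar s+1)}$). Your ``M\"obius function of the backward shift'' is, in disc coordinates, the affine identity $\widehat{H} = 1 - S^*$ of Proposition \ref{prophat}, which makes the identification of ${\rm Lat}(H)$ with ${\rm Lat}(S^*)$ immediate; prefer that to your half-plane formula $H = (M_z^* + \tfrac{1}{2})^{-1}$, since invariant-subspace lattices are not in general preserved under taking inverses, and $M_z^*$ on the half-plane is unbounded. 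Most tellingly, your key lemma --- Carath\'eodory approximation of $\Theta$ by finite Blaschke products, the SOT convergence $T_{B_n}T_{\overline{B_n}} \to T_\Theta T_{\overline{\Theta}}$, and the passage from SOT convergence of projections to convergence in the sense of Definition \ref{defin1} --- is precisely the engine of the paper's proof of Theorem \ref{thmsing2}, where the same Nikolskii estimate is applied to singular inner functions; your weak-* argument for $\int_\t |B_n - \Theta|^2|h|^2 \to 0$ is a clean variant of the argument given there.

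Two small repairs are needed. First, the paper's finite monomial spaces $\m(S)$ are spans of \emph{distinct} monomials; the $\log$-weighted monomials enter only in the definition of Erd\'elyi-Johnson spaces, not in the definition of a monomial space. So $K_{B_n}$ for a Blaschke product with a repeated zero is not the Sarason transform of any finite monomial space, and your phrase ``accounting for multiplicities by derivatives of kernels'' does not rescue this. Fix it by perturbing the zeros of each $B_n$ to be simple: locally uniform convergence to $\Theta$ survives, and then $U^*K_{B_n} = \m(S_n)$ for a genuine finite set $S_n \subset \s$. Second, make sure the Beurling--Lax list is exhausted: besides $K_\Theta$ for nonconstant inner $\Theta$, you must realize $\htwo$ itself (your choice $B_n(z) = z^n$ does it, mirroring the Weierstrass example) and $\{0\}$ (take $B_n(z) = \frac{r_n - z}{1 - r_n z}$ with $r_n \uparrow 1$, so that $B_n \to 1$ locally uniformly and your projection argument gives $K_{B_n} \to \{0\}$). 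With these points patched, your proof is complete and correct.
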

A proof of \ref{thmbeur} that uses real analysis techniques is given in \cite{amBT}.
\begin{cor}
\label{cormo}
A bounded operator $T$ on $\ltwo$ is a monomial operator if and only if for every $\m \in {\rm Lat}(H)$,
the space $T \m$ is in ${\rm Lat} (H)$.
\end{cor}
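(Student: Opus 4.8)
My plan is to prove the two implications separately, using Theorem \ref{thmbeur} throughout to identify ${\rm Lat}(H)$ with the class of monomial spaces.

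For the ``if'' direction (that preserving ${\rm Lat}(H)$ forces $T$ to be monomial) I would test the hypothesis on one-dimensional subspaces. For each $s \in \s$ the space $\c x^s = \m(\{s\})$ is a finite monomial space, and $H x^s = \frac{1}{s+1} x^s$ shows it lies in ${\rm Lat}(H)$. The hypothesis then makes $T(\c x^s) = \c\, Tx^s$ a closed $H$-invariant subspace of dimension at most one. If $Tx^s = 0$ there is nothing to prove (take $c=0$); otherwise $Tx^s$ spans a one-dimensional invariant subspace and is therefore an eigenvector of $H$. The crucial (and elementary) computation is that the eigenvectors of $H$ are precisely the monomials: from $Hf = \lambda f$, i.e. $\int_0^x f(t)\,dt = \lambda x f(x)$, differentiation gives $\lambda x f'(x) = (1-\lambda)f(x)$, so $f(x) = C x^{(1-\lambda)/\lambda}$, and membership in $\ltwo$ forces the exponent into $\s$. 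Hence $Tx^s = c x^\tau$ with $\tau \in \s$ and $c \in \c$, so $T$ is a monomial operator.

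For the ``only if'' direction, suppose $T$ is a monomial operator with $Tx^s = c(s) x^{\tau(s)}$. First I would observe that $T$ sends finite monomial spaces to finite monomial spaces, since $T\m(S) = \spn\{x^{\tau(s)} : s \in S,\ c(s)\neq 0\}$ is the span of finitely many monomials. Given an arbitrary $\m \in {\rm Lat}(H)$, Theorem \ref{thmbeur} provides finite monomial spaces with $\m_n \to \m$, and each $T\m_n$ is again a finite monomial space. The plan is then to show that $T\m_n$ converges to $\overline{T\m}$; granting this, the limit of finite monomial spaces is a monomial space and hence lies in ${\rm Lat}(H)$ by Theorem \ref{thmbeur}. (I read the statement with a closure, since $T\m$ itself need not be closed.)

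The one straightforward inclusion is $\overline{T\m} \subseteq \lim_n T\m_n$: for $f \in \m$, boundedness of $T$ gives $\dist(Tf, T\m_n) \le \|T\|\,\dist(f,\m_n) \to 0$, so every element of $\overline{T\m}$ lies in the limit. I expect the reverse inclusion to be the main obstacle: if $h_n \in \m_n$ and $Th_n \to g$, one must realize $g$ as a limit of $T$ applied to vectors of $\m$. Because $T$ need not be bounded below, $\{h_n\}$ may be unbounded and a naive weak-limit argument fails. My plan is to exploit the explicit action of $T$ on the eigenbasis of $H$ together with the reformulation of the convergence in Definition \ref{defin1} supplied by Proposition \ref{proppf1} to control the approximants $h_n$ and conclude $g \in \overline{T\m}$. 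Establishing this continuity of $\m \mapsto \overline{T\m}$ under the convergence of Definition \ref{defin1} is where the real work lies; with it in hand, both inclusions give $\overline{T\m} = \lim_n T\m_n \in {\rm Lat}(H)$.
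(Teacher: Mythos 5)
Your ``if'' direction is correct, and it is essentially the paper's argument: the paper disposes of it in one sentence (``each monomial function spans a one-dimensional $H$-invariant subspace''), and your eigenvector computation for $H$ is exactly the right way to fill in that sentence. The genuine problem is the ``only if'' direction. The step you defer --- that $\m_n \to \m$ forces $T\m_n \to \overline{T\m}$ --- is not merely ``where the real work lies''; it is \emph{false} for bounded monomial operators, so the outline cannot be completed. Concretely, let $e_0 = 1$ and $e_1 = 1 + \ln x$ (the first two elements of the orthonormal basis \eqref{eqin1}), and let $T$ be the rank-one operator $Tf = \la f, e_1 \ra\, e_0$. Then
\[
T x^s \= \Big( \int_0^1 x^s (1 + \ln x)\, dx \Big) \cdot 1 \= \frac{s}{(s+1)^2}\, x^0 ,
\]
so $T$ is a bounded monomial operator (with $\tau(s) \equiv 0$, $c(s) = s/(s+1)^2$). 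Take $\m_n = \m(\{0,n\}) = \spn\{1, x^n\}$. Since $\sqrt{2n+1}\,x^n$ converges weakly to $0$ (split the integral at $1-\delta$), one computes that $\dist(f,\m_n) \to \| f - \la f,1\ra 1\|$, so $\m_n \to \m := \c \cdot 1$. But $T1 = \la 1, e_1\ra = 0$, hence $\overline{T\m} = \{0\}$, while $T\m_n = \spn\{T1, Tx^n\} = \c\cdot 1$ for every $n \ge 1$, so $\lim_n T\m_n = \c \cdot 1 \supsetneq \overline{T\m}$. Thus only your ``easy'' inclusion $\overline{T\m} \subseteq \lim_n T\m_n$ holds in general; the reverse inclusion fails precisely in the situation you flagged, namely when $T$ is not bounded below and its kernel meets $\m$. (Note this does not contradict the Corollary: both $\{0\}$ and $\c\cdot 1$ lie in ${\rm Lat}(H)$; it only refutes the continuity statement your proof rests on.) Repairing the argument by choosing a cleverer approximating sequence begs the question: exhibiting finite monomial spaces converging to $\overline{T\m}$ is, by Theorem \ref{thmbeur}, exactly what is to be proved.

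The paper's ``only if'' argument avoids limits altogether and is operator-theoretic. It first proves (Lemma \ref{leman}) that for a bounded monomial operator $Tx^s = c(s)x^{\tau(s)}$ the exponent map $\tau$ is a holomorphic self-map of $\s$. It then defines $\phi \in H^\infty(\D(1,1))$ by $\phi(z) = 1\big/\bigl(1 + \tau(\tfrac{1-z}{z})\bigr)$, so that $\phi(\tfrac{1}{1+s}) = \tfrac{1}{1+\tau(s)}$, and verifies on monomials the intertwining identity $HT = T\phi(H)$. Here $\phi(H)$ is a bounded operator (this is where Pick's theorem enters, in the remark following the proof) which leaves every $H$-invariant subspace invariant, whence $HT\m = T\phi(H)\m \subseteq T\m$ for every $\m \in {\rm Lat}(H)$. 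That intertwining relation, made legitimate by the holomorphy of $\tau$ and the $H^\infty$ functional calculus for $H$, is the idea your proposal is missing: invariance of $T\m$ is obtained directly, not through continuity of $\m \mapsto \overline{T\m}$ under the convergence of Definition \ref{defin1}, which the example above shows can fail.
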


\subsection{A Decomposition Theorem}
\begin{defin}
We say a space $\m$ in $\ltwo$ is a \emph{singular space} if $\m$ is a monomial space that does not contain any Clarkson-Erd\H{o}s space.
\end{defin}
\begin{thm}
\label{thmdecomp}
Every monomial space $\m$ has a unique decomposition,
\[
\m = \overline{\m_{0}+\m_1},
\]
where $\m_0$ is an \ej space and $\m_1$ is a singular space.
\end{thm}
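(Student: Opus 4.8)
The plan is to use Theorem \ref{thmbeur} to replace the approximation-theoretic definition of a monomial space by the operator-theoretic condition of $H$-invariance, and then to analyze ${\rm Lat}(H)$ through a concrete Hardy-space model. Since $Hx^s=\frac{1}{s+1}x^s$, the substitution $x=e^{-t}$ followed by a Fourier transform carries $\ltwo$ unitarily onto the Hardy space $\htwo$ of the disk $\d$, carries each monomial $x^s$ to (a multiple of) the Szeg\H{o} kernel $k_{z(s)}$ at the point $z(s)=\frac{-s}{s+1}\in\d$, and carries $H$ to an operator whose eigenvectors are exactly these kernels. Under this model a finite monomial space $\m(S)$ becomes the finite-dimensional span of $\{k_{z(s)}:s\in S\}$, that is, a finite-dimensional model space $K_B=\htwo\ominus B\htwo$ with $B$ the finite Blaschke product vanishing at $\{z(s):s\in S\}$. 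The first step is to verify that the limit of Definition \ref{defin1} matches the monomial spaces with the model spaces: every monomial space other than $\ltwo$ is $K_\Theta$ for a nonconstant inner function $\Theta$, with $\ltwo$ the exceptional top element and with inclusion of monomial spaces corresponding to divisibility of inner functions. The essential bookkeeping is already visible in Example \ref{int.exam.20}: the quantity $\frac{2\re s_k+1}{|s_k+1|^2}$ equals $1-|z(s_k)|^2$, so the Blaschke sum of the $z(s_k)$ is precisely the Sz\'asz series, and the span fills out all of $\ltwo$ exactly when that series diverges, i.e.\ when the $z(s_k)$ violate the Blaschke condition.

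With this dictionary the two summands are the two factors in the canonical factorization $\Theta=B\,S$ of an inner function into a Blaschke product $B$ and a singular inner function $S$. I would first match the classes: an \ej space is exactly a $K_B$ with $B$ a Blaschke product --- its elements being the analytic functions supplied by Clarkson--Erd\H{o}s and Erd\'elyi--Johnson --- while a singular space is exactly a $K_S$ with $S$ singular inner; for instance $\ltworho$ corresponds to an atomic singular inner function and contains no nonzero analytic function. Existence of the decomposition is then a consequence of coprimality: a Blaschke product and a singular inner function have no nonconstant common inner divisor, so $B\htwo\cap S\htwo=BS\,\htwo=\Theta\htwo$, and taking orthogonal complements (using $(\mathcal{A}\cap\mathcal{B})^\perp=\overline{\mathcal{A}^\perp+\mathcal{B}^\perp}$) gives
\[
K_\Theta=\overline{K_B+K_S}.
\]
Transporting $\m_0=K_B$ and $\m_1=K_S$ back to $\ltwo$ produces $\m=\overline{\m_0+\m_1}$ with $\m_0$ an \ej space and $\m_1$ a singular space.

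For uniqueness I would show that each summand is intrinsic to $\m$. The canonical description of the first is $\m_0=\overline{\spn}\,\{\,C:C \text{ is a Clarkson--Erd\H{o}s subspace of }\m\,\}$; the point is that this span is again a single \ej space. This reduces to the fact that the union of the zero sets of all Blaschke divisors of $\Theta$ is still a Blaschke set --- which it must be, since otherwise the span would be all of $\ltwo$ and force $\m=\ltwo$. Thus $\m_0=K_B$ with $B$ the full Blaschke part of $\Theta$, and by uniqueness of the factorization $\Theta=BS$ the singular summand must carry exactly $S$.

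The step I expect to be the main obstacle is uniqueness, and within it the rigidity at the level of finite Blaschke factors --- equivalently, of finite-dimensional pieces such as $\spn\{x^{s}\}$. Such a piece is analytic yet, being finite dimensional, contains no Clarkson--Erd\H{o}s space and is therefore singular under the stated definition, so a priori its Blaschke factor could be shuffled between $\m_0$ and $\m_1$. I would remove this ambiguity by proving the sharper statement that a singular space contains no nonzero function analytic on $\d\setminus(-1,0]$, which confines the entire Blaschke part to $\m_0$. The remaining, more routine, difficulties are the rigorous justification of the model --- that the limit of Definition \ref{defin1} carries not only monotone but also the non-monotone sequences of finite monomial spaces (as in the $\ltworho$ example) onto model spaces $K_\Theta$ --- together with the degenerate bookkeeping for $\m=\{0\}$, where $\Theta$ is constant, and for $\m=\ltwo$, which lies outside the inner-function correspondence.
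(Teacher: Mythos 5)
Your existence argument coincides with the paper's own proof: pass to $H^2$ by the Sarason transform (your $x=e^{-t}$--plus--Fourier unitary is the paper's $U$ of Section \ref{secsa}), write $U\m=(\Theta H^2)^\perp$ with $\Theta$ inner, factor $\Theta=B\Sigma$ into Blaschke and singular parts, and use $BH^2\cap\Sigma H^2=B\Sigma H^2$ together with $(\mathcal{A}\cap\mathcal{B})^\perp=\overline{\mathcal{A}^\perp+\mathcal{B}^\perp}$. That half is correct and is exactly what the paper does.

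The genuine gap is in uniqueness, at precisely the step you flagged. The ``sharper statement'' you propose to prove --- that a singular space contains no nonzero function analytic on $\d\setminus(-1,0]$ --- is \emph{false} under the paper's stated definitions, for the reason you yourself give two sentences earlier: a Clarkson--Erd\H{o}s space is the limit of an increasing chain of spans of distinct monomials, hence infinite dimensional, so the span of a single monomial $x^s$, and more generally $U^*\bigl((b_\alpha\Sigma H^2)^\perp\bigr)$ for a single Blaschke factor $b_\alpha$ and a singular inner $\Sigma$, contains no Clarkson--Erd\H{o}s space and is therefore ``singular'' as defined, even though it contains an analytic monomial. So the lemma you need cannot be proved; it amounts to \emph{redefining} singular as ``contains no monomial'' (equivalently, the associated inner function has no zeros), not to sharpening the given definition. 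Without it, the shuffling you identified is fatal: if $\Theta=B\Sigma$ with $B$ an infinite Blaschke product having a zero at $\alpha$, then both $\bigl(U^*(BH^2)^\perp,\,U^*(\Sigma H^2)^\perp\bigr)$ and $\bigl(U^*(BH^2)^\perp,\,U^*(b_\alpha\Sigma H^2)^\perp\bigr)$ satisfy every stated requirement, since ${\rm lcm}(B,b_\alpha\Sigma)=B\Sigma$; so your intrinsic characterization of $\m_0$ does not pin down $\m_1$. (A secondary slip: taking $\m_0$ to be the closed span of all Clarkson--Erd\H{o}s subspaces of $\m$ loses Blaschke multiplicities, since Clarkson--Erd\H{o}s exponents are distinct; you would need to span the \ej subspaces instead.) For comparison, the paper's proof of Theorem \ref{thmdecomp} is the same transform-and-factor argument as yours and establishes only existence, never addressing uniqueness at all; what you ran into is thus not a defect of your strategy relative to the paper's, but the fact that the uniqueness assertion requires a stronger reading of ``singular'' than the printed definition supplies.
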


\subsection{Atomic Spaces}

Unitary monomial operators can be characterized using a theorem of Bourdon and Narayan \cite{bona10}. Their description is equivalent to the following reformulation, from \cite{amMO}.
\begin{thm}
\label{thmun}
The operator 
\[
T : x^s \mapsto c(s) x^{\tau(s)} 
\]
is a unitary map from $\ltwo$ to $\ltwo$ if and only if $\tau$ is a holomorphic automorphism of $\s$ and
$c$ is given by
\[
c(s) \= c_0 \frac{1 + \overline{\tau (0)} + \tau(s)}{1 + s} ,
\]
where $c_0$ is a constant
satisfying
\[
| c_0 | \= \frac{1}{\sqrt{1 + 2 \Re \tau(0)}} .
\]
\end{thm}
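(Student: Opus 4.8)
The plan is to reduce unitarity to a single functional equation for the pair $(c,\tau)$ and then extract rigidity from it. Since the monomials $\{x^s : s\in\s\}$ are total in $\ltwo$ (they contain all nonnegative integer powers, whose span is dense by the Weierstrass theorem), the map $T$ is unitary exactly when it preserves the Gram matrix of the $x^s$ and has dense range. Using
\[
\ip{x^s}{x^t} = \int_0^1 x^{s+\bar t}\,dx = \frac{1}{s+\bar t+1},
\]
the Gram-preservation condition $\ip{Tx^s}{Tx^t}=\ip{x^s}{x^t}$ becomes
\[
c(s)\,\overline{c(t)}\,(s+\bar t+1) = \tau(s)+\overline{\tau(t)}+1, \qquad s,t\in\s,
\]
which I will denote $(\ast)$. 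Everything is read off from $(\ast)$.

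First I would extract the easy consequences. Putting $s=t=0$ gives $|c(0)|^2 = 1+2\re\tau(0)$; since $T$ is injective, $c$ never vanishes, so with $c_0 := 1/\overline{c(0)}$ we obtain the claimed normalization $|c_0|^2 = (1+2\re\tau(0))^{-1}$. Putting $t=0$ alone gives $c(s)\,\overline{c(0)}\,(s+1)=\tau(s)+\overline{\tau(0)}+1$, that is,
\[
c(s) = c_0\,\frac{1+\overline{\tau(0)}+\tau(s)}{1+s},
\]
which is exactly the stated formula for $c$ in terms of $\tau$. Thus the whole theorem reduces to showing that $(\ast)$ forces $\tau$ to be a holomorphic automorphism of $\s$, and conversely that every such $\tau$ together with this $c$ satisfies $(\ast)$ and yields a surjective map.

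The hard part is the forward rigidity: passing from the operator hypothesis to the assertion that the a priori arbitrary function $\tau:\s\to\s$ is a conformal automorphism. I would do this in two moves. (i) \emph{Möbius.} Subtracting $(\ast)$ at $(s,t)$ from $(\ast)$ at $(s,0)$ shows that $c(s)\big[\overline{c(t)}(s+\bar t+1)-\overline{c(0)}(s+1)\big]$ is independent of $s$; choosing $t$ with $c(t)\neq c(0)$ makes the bracket a genuine degree-one polynomial in $s$, forcing $c(s)=\gamma/(s-s_0)$ for constants $\gamma,s_0$ (the degenerate case $c\equiv\text{const}$ makes $\tau$ affine). Feeding this back into the formula for $c$ shows $\tau$ is a Möbius transformation, in particular holomorphic. (ii) \emph{Automorphism.} Restricting $(\ast)$ to the diagonal $t=s$ gives $|c(s)|^2(2\re s+1)=2\re\tau(s)+1$; letting $s$ tend to the boundary line $\re s=-\tfrac12$ (where $c$ remains finite) forces $\re\tau(s)=-\tfrac12$ there. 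So the Möbius map $\tau$ carries $\partial\s$ into $\partial\s$ while mapping $\s$ into $\s$, hence is an automorphism of $\s$. This diagonal-plus-boundary step is the crux.

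For the converse I would reverse the computation: given $\tau\in\mathrm{Aut}(\s)$ and $c$ as above with the normalized $|c_0|$, verifying $(\ast)$ is a finite Möbius calculation, which conceptually is the conformal invariance of the Szegő kernel $\tfrac{1}{s+\bar t+1}$ of the half-plane, with $c$ playing the role of the associated cocycle (a unimodular constant times $\tau'(s)^{1/2}$). This makes the densely defined $T$ isometric, so it extends to an isometry of $\ltwo$; and because $\tau$ is onto, $\{x^{\tau(s)}:s\in\s\}=\{x^\sigma:\sigma\in\s\}$ is again total, so $T$ has dense range and is therefore unitary. An alternative to the entire argument is to transport the problem to $H^2$ via the unitary sending $x^s$ to a normalized Szegő kernel, turning $T$ into a weighted composition operator and invoking the Bourdon--Narayan classification directly.
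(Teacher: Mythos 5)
Your proposal is correct, but it takes a genuinely different route from the paper. The paper contains no proof of Theorem \ref{thmun} at all: it imports the result from the Bourdon--Narayan classification of unitary weighted composition operators on $\htwo$ \cite{bona10}, as reformulated in \cite{amMO} --- under the Sarason transform a monomial operator becomes the adjoint of a weighted composition operator (Theorem \ref{thmbmo}), so the theorem is obtained by transporting a known Hardy-space result. That is exactly the alternative you sketch in your final sentence. Your main argument instead stays inside $\ltwo$: unitarity is encoded as the Gram identity $c(s)\overline{c(t)}(s+\bar t+1)=\tau(s)+\overline{\tau(t)}+1$; the stated formula for $c$ and the modulus of $c_0$ drop out at $t=0$ and $s=t=0$; the two-variable structure of the identity forces $c$ and $\tau$ to be M\"obius; and the diagonal identity $|c(s)|^2(2\re s+1)=2\re\tau(s)+1$ together with a boundary limit shows $\tau$ preserves the line $\re s=-\tfrac12$, hence is an automorphism of $\s$. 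The converse via conformal invariance of the half-plane Szeg\H{o} kernel, so that $c$ is a unimodular constant times $(\tau')^{1/2}$, is also sound. What your approach buys is a self-contained proof intrinsic to $\ltwo$, very much in the spirit of the intrinsic arguments the paper says it is seeking; what the paper's route buys is brevity, since the Sarason transform machinery is developed there anyway.

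Three small points to tighten, none of which is a real gap. (1) In the M\"obius step, after choosing $t$ with $c(t)\neq c(0)$, you must also note that the constant $\overline{\tau(t)}-\overline{\tau(0)}$ on the right-hand side is nonzero; otherwise you conclude only that $c$ vanishes off a single point. This follows from the non-vanishing of $c$, which you already have from injectivity. (2) The common pole $s_0$ of $c$ and $\tau$ could lie on the line $\re s=-\tfrac12$, so the boundary argument yields $\re\tau=-\tfrac12$ only off at most one boundary point; you then need the standard fact that a M\"obius map carrying a line minus a point into that line carries the whole circle through $\infty$ onto it, after which interior-into-interior upgrades this to an automorphism. (3) You should explicitly rule out constant $\tau$ (a priori possible when the linear fractional expression degenerates, i.e. $s_0=-1$); it is excluded either by the same boundary limit (the diagonal identity would equate a positive constant with $0$) or by surjectivity of $T$. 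Each of these is a one-line fix.
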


\begin{defin}
We say a space $\m$ in $\ltwo$ is \emph{atomic} if there exist $\rho \in (0,1)$ and a unitary monomial operator $T$ such that $\m = T\ltworho$.
\end{defin}
We can describe all atomic spaces in the following way.
\begin{thm}
\label{thmbasis}
The functions
\be
\label{eqin1}
e_n(x) \=\sum_{k=0}^n \binom{n}{k}\frac{(\ln x)^k}{k!},\qquad n\ge 0.
\ee
form an orthonormal basis for $\ltwo$. 
 Furthermore, the operator $J$ defined on $\ltwo$ by requiring
\[
J(e_n)=\twopartdef{e_n}{n \text{ is even}}{-e_n}{n \text{ is odd}}
\]
is a unitary monomial operator, corresponding to the choice
\be
\label{eqin21}
\tau(s)  \= \frac{-s}{1+2s},\qquad c(s) \= \frac{1}{1+2s} 
\ee
in Theorem \ref{thmun}.
\end{thm}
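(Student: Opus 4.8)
The plan is to transport everything to $\ltwoinf$ by the logarithmic change of variable and to recognize the $e_n$ as Laguerre functions. Concretely, I would introduce the substitution $x = e^{-t}$, which induces a map $U : \ltwo \to \ltwoinf$ by $(Uf)(t) = e^{-t/2} f(e^{-t})$; the identity $\int_0^1 |f(x)|^2\,dx = \int_0^\infty e^{-t}|f(e^{-t})|^2\,dt$ shows $U$ is unitary. Writing $L = \ln x = -t$, the binomial sum defining $e_n$ is exactly $e_n(x) = L_n(-\ln x)$, where $L_n(t) = \sum_{k=0}^n \binom{n}{k}\tfrac{(-t)^k}{k!}$ is the $n$-th Laguerre polynomial, so that $(U e_n)(t) = L_n(t)\,e^{-t/2}$ is the $n$-th Laguerre function. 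Since the Laguerre functions form a classical orthonormal basis of $\ltwoinf$ and $U$ is unitary, $\{e_n\}$ is an orthonormal basis of $\ltwo$, settling the first assertion.

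For the operator $J$, I would first verify that the data $\tau(s) = -s/(1+2s)$, $c(s) = 1/(1+2s)$ meet the hypotheses of Theorem \ref{thmun}. The map $\tau$ is a M\"obius transformation with $\tau(0) = 0 \in \s$ and $\re\,\tau(-\tfrac12 + iy) = -\tfrac12$, so it carries $\partial\s$ to $\partial\s$ while fixing an interior point; hence it is a holomorphic automorphism of $\s$. Substituting $\tau(0) = 0$ into the formula of Theorem \ref{thmun} gives $c(s) = c_0(1+\tau(s))/(1+s) = c_0/(1+2s)$, and the normalization $|c_0| = (1 + 2\re\,\tau(0))^{-1/2} = 1$ is met by $c_0 = 1$. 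Thus Theorem \ref{thmun} guarantees that $T : x^s \mapsto c(s) x^{\tau(s)}$ is a unitary monomial operator, and it remains to identify $T$ with $J$, i.e.\ to show $T e_n = (-1)^n e_n$.

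The main device for this identification is the Laguerre generating function, which in our variables reads
\[
G(x,z) \:=\ \sum_{n=0}^\infty e_n(x)\, z^n \= \frac{1}{1-z}\, x^{z/(1-z)}, \qquad |z| < 1 .
\]
Since $z \mapsto s(z) = z/(1-z)$ maps $\d$ onto $\s$, each $x^{s(z)}$ lies in $\ltwo$, and because $\{e_n\}$ is orthonormal the left-hand series converges in $\ltwo$ for $|z|<1$; so $G(\cdot,z)$ is an $\ltwo$-valued analytic function given in closed form on the right. Applying the bounded operator $T$ termwise and using $Tx^{s}=c(s)x^{\tau(s)}$, a short computation with $1+2s(z)=(1+z)/(1-z)$ and $\tau(s(z))=-z/(1+z)$ yields
\[
T G(\cdot,z) \= \frac{1}{1+z}\, x^{-z/(1+z)} \= G(\cdot,-z) \= \sum_{n=0}^\infty (-1)^n e_n(x)\, z^n .
\]
Comparing the two $\ltwo$-valued power series $\sum_n (Te_n)z^n = \sum_n (-1)^n e_n z^n$ and matching coefficients (equivalently, pairing against each $e_m$) gives $Te_n = (-1)^n e_n$, so $T = J$ and $J$ is the asserted unitary monomial operator.

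The genuinely substantive ingredients are the two classical facts about Laguerre functions — their $\ltwoinf$-completeness, needed for the basis claim, and the generating-function identity — together with the justification for moving $T$ inside the sum. Completeness and the generating function are standard, so the step demanding the most care is the interchange of $T$ with the infinite sum and the subsequent coefficient extraction: here I would lean on the boundedness (hence $\ltwo$-continuity) of $T$ and on the $\ltwo$-convergence of $\sum_n e_n z^n$ for $|z|<1$ guaranteed by orthonormality, which together legitimize termwise application and the comparison of $\ltwo$-valued analytic functions. Everything else reduces to routine M\"obius and binomial bookkeeping.
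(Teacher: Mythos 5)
Your proposal is correct, but for the operator $J$ it runs in the opposite direction from the paper's argument, and the two proofs are genuinely different. The basis claim is handled the same way in both: the substitution $x=e^{-t}$ (your change-of-variables unitary is the map the paper writes as $f\mapsto f^\sim$) turns the $e_n$ into the Laguerre functions, whose completeness in $\ltwoinf$ is classical. For $J$, the paper starts from the operator itself: since $J$ is unitary (and, tacitly, a monomial operator), Theorem \ref{thmun} supplies some $\tau$ and $c$; the relations $J1=1$ and $J^2=1$ force $\tau(0)=0$, $c(0)=1$ and $\tau\circ\tau=\mathrm{id}$, and a computation of $Je_1=-e_1$, using $e_1 = 1+\tfrac{\partial}{\partial s}x^s|_{s=0}$, gives $\tau'(0)=-1$; rigidity of automorphisms of $\s$ then pins down \eqref{eqin21}. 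You instead start from the data \eqref{eqin21}: you verify it meets the criterion of Theorem \ref{thmun}, obtain a unitary monomial operator $T$, and identify $T$ with $J$ through the generating-function identity $\sum_n e_n(x) z^n = \tfrac{1}{1-z}x^{z/(1-z)}$, computing $TG(\cdot,z)=G(\cdot,-z)$ and hence $Te_n=(-1)^ne_n$. What your direction buys is that it never needs to know in advance that $J$ --- defined by its action on the basis $\{e_n\}$, not on monomials --- carries monomials to multiples of monomials; that emerges as a conclusion, whereas the paper's step ``$J$ is unitary, so must be given by Theorem \ref{thmun}'' quietly presupposes it (unitarity alone does not suffice: a generic unitary on $\ltwo$ is not a monomial operator). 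The price is an extra classical input (the Laguerre generating function) and two technical points you rightly flag: continuity of $T$ to pass it inside the $\ltwo$-convergent series, and the identification of the $\ltwo$-sum with the pointwise closed form, which needs a one-line argument (an a.e.-convergent subsequence of partial sums, or the observation that the Sarason transform carries both sides to the Szeg\"o kernel $k_{\bar z}$). The paper's route is shorter and exploits the rigidity of involutive automorphisms fixing $0$; yours is longer but more self-contained, and it supplies the justification that the paper's opening sentence omits.
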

The polynomials
\[
p_n(t) \= \sum_{k=0}^n \binom{n}{k}\frac{( t)^k}{k!},\qquad n\ge 0,
\]
are the Laguerre polynomials. They are the orthogonal polynomials on $[0,\infty)$ for the measure $e^{-t}dt$.
Under the change of variables $x = e^{-t}$ they become the functions $e_n$ in \eqref{eqin1}.
Their connection to the Hardy operator was shwon in \cite{kmp06}.

%Hence, $J$ maps monomial spaces to monomial spaces.  Also, 
If $c$ is real, the multiplication operator $M_{x^{ic}}$ is also a unitary monomial operator.
% and carries monomial spaces to monomial spaces. 
These two operators can be used to build the general atomic space.
\begin{enumerate}
\item For any $w >0$, define
\[
\a_{1,w} \ :=\  L^2([e^{-2w},1]).
\]
\item
Define
\[
\a_{-1,w} \ :=\ J\, \a_{1,w}
\]
\item For any  $\tau \in {\mathbb T} \setminus \{1 \}$, define
\[
\a_{\tau,w} \ :=\ M_{x^{ic}}\, \a_{-1,\wp},\qquad {\rm where\ }2 ic=\frac{\tau +1}{\tau -1}, \ \wp = (1 + 4c^2) w.
\]
\end{enumerate}
(The reason for the strange scaling is to simplify the formulas in Section \ref{sechar}).
Atomic spaces are all of the form $\a_{\tau,w}$.
\begin{thm}
\label{thmsing1}
Every atomic space is equal to $\a_{\tau,w}$ for exactly one pair $(\tau,w) \in {\mathbb T} \times (0,\infty)$.
\end{thm}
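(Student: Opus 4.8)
The plan is to realize unitary monomial operators as the (phase-ambiguous) unitary implementations of the holomorphic automorphisms of $\s$, and then to read off $(\tau,w)$ as geometric data attached to the subspace. By Theorem~\ref{thmun} each unitary monomial operator $T$ is determined by an automorphism $\tau_T$ of $\s$ together with a unimodular constant; the constant does not affect the image $T\m$ of any subspace, so we work modulo phase, and $T\mapsto\tau_T$ becomes a surjective homomorphism from the unitary monomial operators onto $\mathrm{Aut}(\s)$ with kernel the phases. Passing to the coordinate $u=s+\tfrac12$, in which $\s$ becomes the right half-plane $\{\re u>0\}$, the three operators in the construction become the three standard generators of $\mathrm{Aut}$: the modulation $M_{x^{ic}}$ is $u\mapsto u+ic$, the Laguerre involution $J$ of Theorem~\ref{thmbasis} is the inversion $u\mapsto 1/(4u)$ (computed from \eqref{eqin21}), and the dilation $D_\lambda\colon f(x)\mapsto\sqrt{\lambda}\,x^{(\lambda-1)/2}f(x^\lambda)$ is $u\mapsto\lambda u$. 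Under $x=e^{-t}$ (equivalently, via the Laplace transform) the space $\ltworho$ is carried to the Paley--Wiener/model space of functions supported on the initial segment $[0,-\ln\rho]$, whose inner function is the one-atom singular function $e^{-au}$, $a=-\ln\rho$, with atom at the boundary point $\infty$; in particular $\a_{1,w}=\ltworho$ with $\rho=e^{-2w}$ has its atom at $\infty$.

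For existence I would track where the automorphism $g=\tau_T$ sends the boundary point $\infty$. Given an atomic space $\m=T\ltworho$, if $g$ fixes $\infty$ then $g$ is affine, $g(u)=\lambda u+ic$, so $T=M_{x^{ic}}D_\lambda$ modulo phase and $T\ltworho=M_{x^{ic}}L^2([\sigma,1])=L^2([\sigma,1])$ is an interval space, i.e.\ some $\a_{1,w}$. If instead $\zeta:=g(\infty)$ is a finite boundary point $ic\in i\mathbb{R}$, then $\iota\circ t_{-ic}\circ g$ fixes $\infty$ (where $t_{ic}\colon u\mapsto u+ic$ and $\iota\colon u\mapsto 1/(4u)$), hence is affine, giving $g=t_{ic}\circ\iota\circ(\text{affine})$; at the operator level $T\ltworho=M_{x^{ic}}JM_{x^{i\beta}}D_\lambda\ltworho$. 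Since modulations fix interval spaces and $D_\lambda\ltworho$ is again an interval space, this collapses to $M_{x^{ic}}J(\text{interval})=M_{x^{ic}}\a_{-1,\wp}$, which is exactly $\a_{\tau,w}$ for the $\tau$ determined by $2ic=(\tau+1)/(\tau-1)$ and the correspondingly rescaled $w$. Thus every atomic space equals some $\a_{\tau,w}$.

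For uniqueness I would recover $\tau$ and $w$ intrinsically. The crucial computation is the stabilizer of an interval space: if $T\ltworho=\ltworho$ then $g=\tau_T$ fixes the inner function $e^{-au}$ up to a unimodular constant, and because a model space determines its inner function up to such a constant, $e^{-a g^{-1}(u)}=(\text{const})\,e^{-au}$, which forces $g$ to be an imaginary translation. Hence $\mathrm{Stab}(\a_{1,w})$ is exactly the modulation group, whose unique boundary fixed point is $\infty$. Conjugating by the automorphism carrying $\infty$ to $\zeta(\tau)=\tfrac12(\tau+1)/(\tau-1)$, the stabilizer of a general $\a_{\tau,w}$ is the corresponding one-parameter parabolic subgroup, whose unique boundary fixed point is $\zeta(\tau)$. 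Since distinct $\tau\in\t$ give distinct $\zeta(\tau)$, and $\mathrm{Stab}(\m)$ is an invariant of $\m$, the value of $\tau$ is determined by $\m$; then, with $\tau$ fixed, applying the monomial-unitary inverse of that conjugating automorphism carries $\a_{\tau,w}$ to an interval space whose length is a fixed increasing function of $w$, so equality of two such spaces forces $w=w'$. This is the ``exactly one pair'' assertion.

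The main obstacle is the stabilizer/rigidity step: showing that the only unitary monomial operators fixing an interval space $\ltworho$ are the modulations. The rest is bookkeeping with the three generators of $\mathrm{Aut}(\s)$ and the boundary circle $i\mathbb{R}\cup\{\infty\}$, but it is exactly this rigidity that forces injectivity of $(\tau,w)\mapsto\a_{\tau,w}$. I would establish it through the Laplace-transform/Paley--Wiener identification of $\ltworho$ with the model space of the singular inner function $e^{-au}$, together with the Beurling-type fact that a model space determines its inner function up to a unimodular constant; the single-atom singular inner functions for the right half-plane are then parametrized precisely by their (boundary point, mass), which corresponds under the Cayley map to $(\tau,w)\in\t\times(0,\infty)$.
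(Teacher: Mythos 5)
Your proposal is correct, and it splits into existence and uniqueness just as the paper's proof does, but only the existence half follows the paper's route: your case analysis on where $g=\tau_T$ sends the boundary point at infinity (affine if $\infty$ is fixed; otherwise conjugate by the inversion $J$ and a modulation) is, after the substitution $u=s+\tfrac12$, exactly the paper's Cases (i)--(iii), which track whether $\tau(\infty)$ equals $\infty$, $-\tfrac12$, or $-\tfrac12+i\delta$. For uniqueness the two arguments genuinely diverge. The paper stays inside $\ltwo$: it computes $\|P_{\a_{\tau,w}}x^s\|^2$ in closed form (equations \eqref{eqdd2} and \eqref{eqdd3}) and observes that this function of $s$ already determines $(\tau,w)$. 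You instead recover the parameters group-theoretically: the stabilizer of $\a_{\tau,w}$ in $\mathrm{Aut}(\s)$ is a one-parameter parabolic subgroup whose unique common boundary fixed point is $\zeta(\tau)$, and $w$ is read off after conjugating back to an interval space. Your rigidity lemma---that the stabilizer of $\ltworho$ consists exactly of the modulations---is correct and provable as you indicate: on the Hardy-space side a unitary monomial operator carries a shift-invariant subspace $\Theta \htwo$ onto $(\Theta\circ\psi)\htwo$ (the weight is outer, indeed invertible, by the formula in Theorem \ref{thmun}), so Beurling uniqueness forces $e^{-a\,g(u)}=\mathrm{const}\cdot e^{-au}$; taking moduli gives $\Re g(u)=\Re u$, hence $g(u)-u$ is an imaginary constant. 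The trade-off is this: your route explains what the parameters \emph{mean} ($\zeta(\tau)$ is the fixed point of the stabilizer, $w$ its mass), and the rigidity lemma is of independent interest; but it imports Hardy-space machinery (the Laplace/Paley--Wiener identification plus Beurling's theorem), whereas the paper's uniqueness computation is intrinsic to $\ltwo$---a distinction the authors evidently care about, since Theorem \ref{thmsing1} is deliberately proved in Section \ref{secpfs} rather than Section \ref{sechar}, and finding intrinsic proofs is posed there as an open problem.
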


\subsection{The Structure of Singular Spaces}

 Say $\m$ is \emph{finitely atomic} if $\m$ is a finite sum of atomic spaces. 
 If 
\[
\mu = \sum_{k=1}^n \ w_k \ \delta_{\tau_k}
\]
is a finitely atomic measure on $\t$, with distinct atoms $\tau_k$, we define a finitely atomic space in $\ltwo$ by the formula
\[
\m(\mu)=\sum_{k=1}^n \a_{\tau_k,w_k}
\]
 \begin{thm}
 \label{thmsing2}

 The assignment $\mu \mapsto \m(\mu)$ extends by weak-* sequential continuity to 
a map from the positive singular Borel measures on $\t$ into closed subspaces of $\ltwo$. When extended,
     \[
     \mu_n \to \mu \text{ weak-* } \implies \m(\mu_n) \to \m(\mu).
     \]
% In particular, $\m(\mu)$ is a monomial space for every positive Borel measure on $\t$.
%\item If 
%\[
%\mu = w(\theta) \frac{d\theta}{2\pi} + \mu_s
%\]
%is the Lebesgue Decomposition of $\mu$, then $\m(\mu)=\m(\mu_s)$.
%\item For any positive Borel measure on $\t$, we have that 
%$\m(\mu)$ is a singular space if $\mu_s$ is non-zero, and is all of $\ltwo$ if $\mu_s = 0$.
%\end{enumerate}
\end{thm}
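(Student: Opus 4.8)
The plan is to reduce the statement to a convergence theorem for model spaces of singular inner functions. First I would invoke Proposition \ref{proppf1} to re-express the convergence $\m_n\to\m$ of Definition \ref{defin1} as strong-operator convergence $P_{\m_n}\to P_\m$ of the orthogonal projections. The one general fact I would keep in reserve is that if orthogonal projections converge in the weak operator topology to a limit that is itself a projection, then they converge in the strong operator topology (expand $\|(P_n-P)f\|^2$). I would also record that a weak-$*$ convergent sequence of positive measures on the compact circle $\t$ is norm bounded and has convergent total mass (test against the constant function $1$), so that no mass escapes in the limit.

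The structural core is a realization of $\m(\mu)$ as a model space. I would show that there is a fixed unitary $W:\ltwo\to\htwo$ --- built from the Laguerre basis $\{e_n\}$ of Theorem \ref{thmbasis} together with the operators $J$ and $M_{x^{ic}}$, and calibrated by exactly the scaling flagged before Theorem \ref{thmsing1} --- under which each atomic space is carried to a model space,
\[
W\,\a_{\tau,w}\ =\ \calk_{\Theta_{\tau,w}}\ :=\ \htwo\ominus \Theta_{\tau,w}\htwo,
\]
where $\Theta_{\tau,w}$ is the atomic singular inner function with a single point mass of weight $w$ at $\tau$. Here $\ltworho$ transforms into a Paley--Wiener model space (functions of finite exponential type), and $J,M_{x^{ic}}$ act as the Cayley-type symmetries that move the atom to an arbitrary $\tau\in\t$; checking that $W\a_{\tau,w}$ is the asserted model space is a direct computation once $W$ is pinned down. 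For a finitely atomic $\mu=\sum_k w_k\delta_{\tau_k}$ with distinct atoms, the factors $\Theta_{\tau_k,w_k}$ are pairwise coprime inner functions, so $\overline{\sum_k\calk_{\Theta_{\tau_k,w_k}}}=\calk_{\Theta_\mu}$ with $\Theta_\mu=\prod_k\Theta_{\tau_k,w_k}$ the singular inner function of $\mu$ (each $\calk_{\Theta_{\tau_k,w_k}}\subseteq \calk_{\Theta_\mu}$, and coprimality forces the closed span to exhaust $\calk_{\Theta_\mu}$). Thus $W\,\m(\mu)=\calk_{\Theta_\mu}$ for every finitely atomic $\mu$, and $\mu\mapsto\Theta_\mu$ is the classical bijection between positive singular measures and singular inner functions.

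With this in hand I would prove the convergence engine: if $\mu_n\to\mu$ weak-$*$ with all of $\mu_n,\mu$ singular, then $P_{\calk_{\Theta_{\mu_n}}}\to P_{\calk_{\Theta_\mu}}$ in the strong operator topology. Weak-$*$ convergence gives $\Theta_{\mu_n}\to\Theta_\mu$ uniformly on compact subsets of $\d$, because for fixed $z\in\d$ the Herglotz kernel $\zeta\mapsto\frac{\zeta+z}{\zeta-z}$ is continuous on $\t$. Since the limit $\mu$ is singular, $\Theta_\mu$ is again inner, so $\|\Theta_{\mu_n}\|_{\htwo}=\|\Theta_\mu\|_{\htwo}=1$; uniform-on-compacta convergence of a norm-bounded sequence gives weak $\htwo$ convergence, and weak convergence together with convergence of norms upgrades to $\htwo$-norm (equivalently $L^2(\t)$) convergence $\Theta_{\mu_n}\to\Theta_\mu$. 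From $P_{\Theta_n\htwo}f=\Theta_n P_+(\overline{\Theta_n}f)$ and two applications of the elementary fact that multiplication by a modulus-one sequence converging in $L^2$ preserves $L^2$ limits, one gets $P_{\Theta_{\mu_n}\htwo}\to P_{\Theta_\mu\htwo}$ strongly, hence $P_{\calk_{\Theta_{\mu_n}}}\to P_{\calk_{\Theta_\mu}}$ strongly. Transporting back by $W$ and applying Proposition \ref{proppf1} yields $\m(\mu_n)\to\m(\mu)$. Finally, defining $\m(\mu):=W^*\calk_{\Theta_\mu}$ for an arbitrary singular $\mu$ produces a closed subspace that agrees with the finitely atomic definition; since any singular $\mu$ is a weak-$*$ limit of finitely atomic measures, this value is forced by continuity and is manifestly independent of the approximating sequence, because it depends on $\mu$ only through $\Theta_\mu$.

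The main obstacle is the passage at the boundary encoded in the convergence engine: uniform-on-compacta convergence of inner functions does not in general force strong convergence of the model-space projections, since mass can leak into an absolutely continuous part and destroy innerness of the limit symbol. The hypothesis that both the $\mu_n$ and the limit $\mu$ are singular, combined with conservation of total mass on $\t$, is exactly what keeps $\Theta_\mu$ inner and drives the upgrade from weak to norm convergence of the $\Theta_{\mu_n}$; pinning down the normalized unitary $W$ so that the weight $w$ of an atom matches the mass of the corresponding singular inner function (the ``strange scaling'') is the other place where care is required.
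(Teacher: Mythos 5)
Your proposal is correct and follows essentially the same route as the paper: transport everything by the Sarason transform so that $\m(\mu)$ becomes the model space $(\Theta_\mu \htwo)^\perp$ of the singular inner function of $\mu$ (the paper's Lemmas \ref{lemsin1} and \ref{lemsin2}), then use singularity of the limit measure to upgrade weak-$*$ convergence to $\htwo$-norm convergence of the inner functions, deduce SOT convergence of the projections $T_{\varphi_n}T_{\overline{\varphi}_n}$, and finish with Proposition \ref{proppf1}. The only differences are expository (your ``modulus-one multiplication'' lemma is the paper's Toeplitz estimate from \cite{nik}, and you make the well-definedness of the extension explicit), so no gap to report.
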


\subsection{The main idea}
\label{subsecpt}

The Hardy operator is unitarily equivalent to $1- S^*$, where $S$ is the unilateral shift, via a unitary
$U : \ltwo \to H^2$ that we call the Sarason transform, described in Section \ref{secsa}.
It follows that the invariant subspaces of $H$ can be described by Beurling's theorem \cite{beu} in terms of model spaces, the invariant subspaces for the backward shift described for example in \cite{gmr16}.
However, all the theorems above have been stated in terms that are {\em intrinsic} to $\ltwo$.
We believe that finding proofs that are also intrinsic to $\ltwo$ will illuminate this space with a new light.
So far, the authors have only succeeded in doing this for some of these results. 
\begin{problem}
Find real analysis proofs to Theorems \ref{thmdecomp} and \ref{thmsing2}.
\end{problem}

\section{The Sarason Transform}
\label{secsa}
\subsection{The Definition}
We let $k_\alpha$ denote the Szeg\"o kernel function for $\htwo$, the classical Hardy space of square integrable functions on $\d$, i.e.,
\[
k_\alpha (z) = \frac{1}{1-\bar\alpha z},\qquad z \in \d.
\]

As the monomials are linearly independant in $\ltwo$, there is a well defined map $L$ defined on polynomials in $\ltwo$ into $\htwo$ defined by the formula
\[
L(\sum_{n=0}^N a_nx^n) = \sum_{n=0}^N a_n \frac{1}{n+1}k_{\frac{n}{n+1}}
\]
Noting that
\[
\ip{L(x^i)}{L(x^j)}_{\htwo} = \ip{x^i}{x^j}_{\ltwo}
\]
for all nonnegative integers $i$ and $j$, it follows that
\[
\ip{L(p)}{L(q)}_{\htwo} = \ip{p}{q}_{\ltwo}
\]
for all polynomials $p$ and $q$, i.e., $L$ is isometric. Hence, as the polynomials are dense in $\ltwo$, $L$ has a unique extension to an isometry $U$ defined on all of $\ltwo$. Finally, noting that
\[
\set{\frac{n}{n+1}}{n \text{ is a nonegative integer}}
\]
is a set of uniqueness for $\htwo$, it follows that the range of $L$ is dense in $\htwo$, which implies that $U$ is a unitary transformation from $\ltwo$ onto $\htwo$.
\begin{defin}
\label{defst}
We let $U$ denote the unique unitary transformation from $\ltwo$ onto $\htwo$ that satisfies
\[
U(x^n) = \frac{1}{n+1} k_{\frac{n}{n+1}}
\]
for all nonnegative integers $n$.
\end{defin}
We call $U$ the Sarason transform, as it is similar to the transform from $L^2([0,\infty))$ onto $\htwo$ used in 
\cite{sar65c}.
\subsection{Moments in $\ltwo$ and Interpolation in $\htwo$}
As the monomials are dense in $\ltwo$, a function $f\in \ltwo$ is uniquely determined by its moment sequence
\[
\int_0^1 x^n\ f(x)dx,\qquad n=0,1,\ldots.
\]
Similarly, as the sequence $\set{1-\tfrac{1}{n+1}}{n=0,1,\ldots}$ is a set of uniqueness for $\htwo$, a function $h \in \htwo$ is the unique solution $g$ in $\htwo$ to the interpolation problem
\[
g(\frac{n}{n+1}) = h(\frac{n}{n+1}),\qquad n=0,1, \ldots
\]
The following proposition follows immediately from Definition \ref{defst}.
\begin{prop}
Fix a sequence of complex numbers $w_0,w_1,w_2,\ldots$. If $f$ in $\ltwo$ solves the moment problem
\[
\int_0^1 x^n\ f(x)\ dx=w_n,\qquad n=0,1,\ldots,
\]
then $Uf\in \htwo$ and solves the interpolation problem
\[
Uf(\frac{n}{n+1})=(n+1)w_n,\qquad n=0,1,\ldots\ .
\]
If $h \in \htwo$ solves the interpolation problem
\[
h(\frac{n}{n+1}) =w_n,\qquad n=0,1,\ldots ,
\]
then $U^*h \in \ltwo$ and solves the moment problem
\[
\int_0^1 x^n\ U^*h(x)\ dx=\frac{1}{n+1} w_n,\qquad n=0,1,\ldots\ .
\]
\end{prop}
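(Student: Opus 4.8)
The plan is to exploit just two things: that $U$ is unitary, so it preserves inner products and satisfies $U^* = U^{-1}$, and that the Szeg\"o kernel reproduces point evaluation, i.e. $\ip{g}{k_\alpha}_{\htwo} = g(\alpha)$ for every $g \in \htwo$. The bridge between the moment problem and the interpolation problem is the elementary observation that a moment of $f$ is literally an $\ltwo$ inner product: since $x^n$ is real valued on $[0,1]$,
\[
\int_0^1 x^n\, f(x)\,dx \= \ip{f}{x^n}_{\ltwo}.
\]
This is what lets $U$ transport the data from one problem to the other.

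For the first assertion I would start from $w_n = \ip{f}{x^n}_{\ltwo}$, use that $U$ is an isometry to write $\ip{f}{x^n}_{\ltwo} = \ip{Uf}{U x^n}_{\htwo}$, and then substitute $U x^n = \tfrac{1}{n+1} k_{\frac{n}{n+1}}$ from Definition \ref{defst}. Because the scalar $\tfrac{1}{n+1}$ is real it leaves the second slot of the inner product unconjugated, giving $\tfrac{1}{n+1}\ip{Uf}{k_{\frac{n}{n+1}}}_{\htwo}$, and the reproducing property collapses this to $\tfrac{1}{n+1}(Uf)(\tfrac{n}{n+1})$. Equating with $w_n$ and clearing the factor yields $(Uf)(\tfrac{n}{n+1}) = (n+1)w_n$.

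The second assertion is the same computation read backwards. Writing $g = U^*h$, I would compute the moments of $g$ as $\ip{U^*h}{x^n}_{\ltwo}$, move $U^*$ across the pairing via the adjoint relation to obtain $\ip{h}{U x^n}_{\htwo}$, and again insert $U x^n = \tfrac{1}{n+1} k_{\frac{n}{n+1}}$ and apply the reproducing property, landing on $\tfrac{1}{n+1}h(\tfrac{n}{n+1}) = \tfrac{1}{n+1}w_n$, which is the claimed moment identity.

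There is no genuine obstacle here; the only point that requires attention is the interplay between the scalar $\tfrac{1}{n+1}$ and the conjugate linearity of the inner product in its second argument. It is precisely because both $x^n$ and the normalizing factor $\tfrac{1}{n+1}$ are real that no stray complex conjugates intrude, and everything reduces to transporting inner products through the unitary $U$. This is consistent with the assertion that the proposition is immediate from the construction of $U$.
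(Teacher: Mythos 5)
Your proof is correct and is exactly the computation the paper has in mind: the paper declares the proposition to follow immediately from Definition \ref{defst}, and your chain $w_n = \ip{f}{x^n}_{\ltwo} = \ip{Uf}{Ux^n}_{\htwo} = \tfrac{1}{n+1}\ip{Uf}{k_{\frac{n}{n+1}}}_{\htwo} = \tfrac{1}{n+1}(Uf)(\tfrac{n}{n+1})$ is the same pattern the paper itself uses in the proofs of Lemmas \ref{sa.lem.10} and \ref{lemchi}. Nothing is missing; your attention to the realness of $x^n$ and of the factor $\tfrac{1}{n+1}$ is the only subtlety, and you handled it correctly.
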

The correspondence between moments and interpolation described in the preceding proposition allows us to easily calculate the Sarason transform of many common functions. We illustrate this with the following two lemmas.
\begin{lem}\label{sa.lem.10}
If $\alpha \in \d$, then
\be\label{40}
U^* (k_\alpha) (x) = \frac{1}{1-\bar\alpha} x^{\tfrac{\bar\alpha}{1-\bar\alpha}},\qquad x\in[0,1].
\ee
If $Re\ \beta > -\frac12$, then
\[
U (x^{\beta} ) \= \frac{1}{\beta+1} k_{\frac{\bar\beta}{\bar\beta+1}}.
\]
\end{lem}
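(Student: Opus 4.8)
The plan is to prove the second identity directly from the preceding proposition, and then obtain the first by inverting, since the two formulas are adjoint to one another. First I would compute the moments of the monomial $x^\beta$: for every nonnegative integer $n$ one has $\int_0^1 x^n x^\beta\,dx = \frac{1}{n+\beta+1}$ (the integral converges because $\Re\beta > -\tfrac12$), so $x^\beta$ solves the moment problem with data $w_n = \frac{1}{n+\beta+1}$. By the preceding proposition, $U(x^\beta)$ is then the element of $\htwo$ that solves the interpolation problem $U(x^\beta)\big(\frac{n}{n+1}\big) = (n+1)w_n = \frac{n+1}{n+\beta+1}$ for all $n$.

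Next I would test the candidate $\frac{1}{\beta+1}k_{\gamma}$ with $\gamma = \frac{\bar\beta}{\bar\beta+1}$ against this interpolation data. Since $\bar\gamma = \frac{\beta}{\beta+1}$, a short computation gives $k_\gamma\big(\frac{n}{n+1}\big) = \big(1 - \frac{\beta}{\beta+1}\cdot\frac{n}{n+1}\big)^{-1} = \frac{(\beta+1)(n+1)}{n+\beta+1}$, so that $\frac{1}{\beta+1}k_\gamma\big(\frac{n}{n+1}\big) = \frac{n+1}{n+\beta+1}$, exactly the required values. Because $\set{\frac{n}{n+1}}{n\ge 0}$ is a set of uniqueness for $\htwo$, the interpolant is unique, and the second identity follows.

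The only point needing genuine care before this uniqueness step applies is that $k_\gamma$ actually lies in $\htwo$, i.e. that $\gamma\in\d$; this is precisely where the hypothesis $\Re\beta > -\tfrac12$ enters, since $|\gamma|<1 \iff |\beta| < |\beta+1| \iff 2\Re\beta + 1 > 0$. In other words, the map $\beta \mapsto \frac{\bar\beta}{\bar\beta+1}$ carries $\s$ onto $\d$, and I expect this to be the one substantive step; everything else is bookkeeping algebra.

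Finally I would deduce the first identity by inverting the relation between $\beta$ and $\gamma$. Writing $\alpha = \frac{\bar\beta}{\bar\beta+1}$ and solving gives $\beta = \frac{\bar\alpha}{1-\bar\alpha}$ and $\beta + 1 = \frac{1}{1-\bar\alpha}$; since $\alpha\in\d$ forces $\beta\in\s$, applying $U^*$ to the second identity yields $U^*(k_\alpha) = (\beta+1)x^\beta = \frac{1}{1-\bar\alpha}x^{\frac{\bar\alpha}{1-\bar\alpha}}$, as claimed. Alternatively, the first identity can be established symmetrically and directly from the proposition: $U^*(k_\alpha)$ solves the moment problem with data $\frac{1}{n+1}k_\alpha\big(\frac{n}{n+1}\big)$, which one checks equals the moment sequence of $\frac{1}{1-\bar\alpha}x^{\frac{\bar\alpha}{1-\bar\alpha}}$, with uniqueness supplied by the density of the monomials in $\ltwo$.
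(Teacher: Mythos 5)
Your proof is correct, but it runs in the opposite direction from the paper's, and rests on the dual uniqueness principle. The paper proves the first identity (the formula for $U^*k_\alpha$) directly on the $\ltwo$ side: it pairs both sides against the monomials $x^n$ and checks the inner products agree, via the chain $\ip{x^n}{\tfrac{1}{1-\bar\alpha}x^{\bar\alpha/(1-\bar\alpha)}}_{\ltwo} = \tfrac{1}{n+1}k_{n/(n+1)}(\alpha) = (Ux^n)(\alpha) = \ip{x^n}{U^*k_\alpha}_{\ltwo}$, with density of the polynomials in $\ltwo$ supplying uniqueness; the equivalence of the two identities is simply asserted. You instead prove the second identity first, on the $\htwo$ side: you feed the moments of $x^\beta$ through the moment--interpolation proposition, match the values of the candidate $\tfrac{1}{\beta+1}k_{\bar\beta/(\bar\beta+1)}$ at the points $\tfrac{n}{n+1}$, and invoke the set-of-uniqueness property of $\{\tfrac{n}{n+1}\}$; then you recover the first identity by inverting the M\"obius correspondence $\beta \mapsto \bar\beta/(\bar\beta+1)$ between $\s$ and $\d$ (which you rightly flag as the place where $\Re\beta > -\tfrac12$ is used). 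The underlying algebra is the same integral $\int_0^1 x^a\,dx = \tfrac{1}{a+1}$ in both cases, so the two arguments are genuinely dual rather than genuinely different; it is worth noting that your interpolation-plus-uniqueness strategy is exactly the one the paper itself uses for the next result, Lemma \ref{lemchi}, and your version has the small advantage of making the equivalence of the two assertions explicit (via $\beta = \bar\alpha/(1-\bar\alpha)$, $\beta+1 = 1/(1-\bar\alpha)$) rather than leaving it to the reader.
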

\begin{proof}
We note that the two assertions of the lemma are equivalent. Therefore it suffices to prove \eqref{40}. Since the left and right hand sides of \eqref{40} are in $\ltwo$, to show \eqref{40} it suffices to show that for each $n \ge 0$
\[
\ip{x^n}{U^*k_\alpha}_{\ltwo} =
\ip{x^n}{\frac{1}{1-\bar\alpha} x^{\tfrac{\bar\alpha}{1-\bar\alpha}}}_{\ltwo} .
\]
But
\begin{align*}
\ip{x^n}{\frac{1}{1-\bar\alpha} x^{\tfrac{\bar\alpha}{1-\bar\alpha}}}_{\ltwo}
&=\frac{1}{1-\alpha}\ip{x^n}{ x^{\tfrac{\bar\alpha}{1-\bar\alpha}}}_{\ltwo}\\
&=\frac{1}{1-\alpha}\int_0^1\ x^n \ \overline{x^{\tfrac{\bar\alpha}{1-\bar\alpha}}}\ dx\\
&=\frac{1}{1-\alpha} \  \frac{1}{ n+ \tfrac{\alpha}{1-\alpha}+1}\\
&=\frac{1}{n+1} \ \frac{1}{1-\frac{n}{n+1}\alpha}\\
&=\frac{1}{n+1} k_{\frac{n}{n+1}}(\alpha)\\
&= (Ux^n)(\alpha)\\
&=\ip{Ux^n}{k_\alpha}_{\htwo}\\
&=\ip{x^n}{U^*k_\alpha}_{\ltwo}
\end{align*}
\end{proof}
For $S$ a measurable set in $[0,1]$ let $\chi_S$ denote the characteristic function of $S$.
\begin{lem}
\label{lemchi}
If $s\in [0,1]$, then
\be\label{50}
U\chi_{[0,s]} (z) = \sqrt s\  e^{ \tfrac12 \ln s \  \frac{1+z}{1-z}}
\ee
\end{lem}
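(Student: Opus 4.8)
The plan is to compute the Sarason transform of $\chi_{[0,s]}$ by exploiting the moment–interpolation correspondence established in the preceding proposition, just as was done for $k_\alpha$ and $x^\beta$ in Lemma \ref{sa.lem.10}. First I would record the moments of $\chi_{[0,s]}$: for each nonnegative integer $n$,
\[
\int_0^1 x^n \chi_{[0,s]}(x)\, dx \= \int_0^s x^n\, dx \= \frac{s^{n+1}}{n+1}.
\]
By the proposition, $U\chi_{[0,s]}$ is then the unique $H^2$ function whose value at $\frac{n}{n+1}$ equals $(n+1)\cdot \frac{s^{n+1}}{n+1} = s^{n+1}$ for all $n\ge 0$. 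So the whole problem reduces to a uniqueness-of-interpolant argument: I must verify that the proposed right-hand side of \eqref{50} actually takes these prescribed values on the set $\set{\frac{n}{n+1}}{n\ge 0}$, which is a set of uniqueness for $\htwo$.

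The second step is therefore to evaluate the candidate function
\[
g(z) \= \sqrt{s}\; e^{\frac12 \ln s \,\frac{1+z}{1-z}}
\]
at $z = \frac{n}{n+1}$. The key algebraic simplification is that $\frac{1+z}{1-z}$ is the Cayley-type transform sending $\frac{n}{n+1}$ to $2n+1$: indeed $1 + \frac{n}{n+1} = \frac{2n+1}{n+1}$ and $1 - \frac{n}{n+1} = \frac{1}{n+1}$, so their ratio is $2n+1$. Hence $\frac12 \ln s \cdot (2n+1) = (n+\tfrac12)\ln s$, giving
\[
g\!\left(\tfrac{n}{n+1}\right) \= \sqrt{s}\; e^{(n+\frac12)\ln s} \= \sqrt{s}\cdot s^{\,n+\frac12} \= s^{\,n+1},
\]
which matches the required interpolation data exactly.

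To finish, I would confirm that $g$ is genuinely a bounded analytic function on $\d$, so that the uniqueness of the interpolant applies. Writing $w = \frac{1+z}{1-z}$, which maps $\d$ into the right half-plane $\{\re w > 0\}$, and noting $\ln s \le 0$ for $s \in [0,1]$, we have $\re\bigl(\tfrac12 \ln s\, w\bigr) = \tfrac12 \ln s\,\re w \le 0$, so $|g(z)| \le \sqrt{s} \le 1$ on $\d$; thus $g \in \htwo$. Since $g$ and $U\chi_{[0,s]}$ are both $\htwo$ functions agreeing on the uniqueness set $\set{\frac{n}{n+1}}{n\ge 0}$, they coincide, proving \eqref{50}. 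The endpoint cases $s=0$ and $s=1$ should be checked separately or handled by continuity, with $s=0$ interpreted via the convention $\sqrt{0}\,e^{-\infty}=0$ matching $\chi_{\{0\}}=0$ in $\ltwo$. I expect the only real subtlety to be this boundary bookkeeping and the explicit verification that $\frac{1+z}{1-z}$ realizes the Cayley transform correctly; the rest is a direct moment computation plus the uniqueness principle already available from the set-of-uniqueness property of $\set{\frac{n}{n+1}}{n\ge 0}$.
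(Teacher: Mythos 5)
Your proposal is correct and follows essentially the same route as the paper: both compute the interpolation data $U\chi_{[0,s]}\bigl(\tfrac{n}{n+1}\bigr)=s^{n+1}$ (you via the moment--interpolation proposition, the paper via a direct inner-product computation), both verify that the candidate function $\sqrt{s}\,e^{\frac12\ln s\,\frac{1+z}{1-z}}$ takes these values, and both conclude via the set-of-uniqueness property of $\set{\frac{n}{n+1}}{n\ge 0}$. Your explicit check that the candidate lies in $\htwo$ (the paper gets this implicitly by recognizing it as a constant multiple of the singular inner function $E_w$) and your attention to the endpoints $s=0,1$ are harmless refinements of the same argument.
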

\begin{proof}
We first observe that
\begin{align*}
\frac{s^{n+1}}{n+1}&= \int_0^s x^n\ dx\\
&=\ip{\chi_{[0,s]}}{x^n}_{\ltwo}\\
&=\ip{U\chi_{[0,s]}}{U x^n}_{\htwo}\\
&=\ip{U\chi_{[0,s]}}{\frac{1}{n+1}k_{\frac{n}{n+1}}}_{\htwo}\\
&=\frac{1}{n+1}U\chi_{[0,s]}(\frac{n}{n+1}),
\end{align*}
so that
\[
U\chi_{[0,s]}(\frac{n}{n+1})=s^{n+1}
\]
for all $n\ge 0$. On the other hand if for $w>0$ we let $E_w$ denote the singular inner function defined by
\[
E_w(z) = e^{-w\frac{1+z}{1-z}}=e^w e^{-\frac{2w}{1-z}},
\]
we have that
\begin{align*}
E_w(\frac{n}{n+1})&=e^w e^{-2w(n+1)}=e^w (e^{-2w})^{n+1}
\end{align*}
for all $n\ge 0$. Hence if we choose $w=-\tfrac12 \ln s$,
\[
U\chi_{[0,s]}(\frac{n}{n+1}) = e^{-w} E_w (\frac{n}{n+1})
\]
for all $n \ge 0$. Since $\set{1-\tfrac{1}{n+1}}{n\ge 0}$ is a set of uniqueness for $\htwo$, it follows that
\[
U\chi_{[0,s]}(z) = e^{-w} E_w (z)
\]
for all $z\in \d$, which implies \eqref{50}.
\end{proof}
\subsection{A Formula for the 
Sarason Transform}

\begin{prop}
If $f \in \ltwo$, then
\[
Uf (z) = \frac{1}{1-z} \int_0^1 f(x) x^{\frac{z}{1-z}}dx
\]
for all $z \in \d$.
\end{prop}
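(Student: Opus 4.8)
The plan is to reduce the identity to the reproducing-kernel property of $\htwo$ together with the adjoint formula already computed in Lemma~\ref{sa.lem.10}, so that no density or limiting argument is needed. First I would fix $z \in \d$ and recall that, since $k_z$ is the Szeg\"o kernel, every $g \in \htwo$ satisfies $g(z) = \ip{g}{k_z}_{\htwo}$. Applying this to $g = Uf$ gives $Uf(z) = \ip{Uf}{k_z}_{\htwo}$, and since $U$ is unitary this equals $\ip{f}{U^*k_z}_{\ltwo}$. Thus the whole computation is pointwise in $z$ and valid for every $f \in \ltwo$ from the outset.

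Next I would substitute the explicit expression for $U^*k_z$ furnished by the first assertion of Lemma~\ref{sa.lem.10}, namely $U^*k_z(x) = \frac{1}{1-\bar z}\,x^{\frac{\bar z}{1-\bar z}}$. Writing out the $\ltwo$ inner product, which is conjugate-linear in the second slot as in the proof of that lemma, yields
\[
Uf(z) \= \int_0^1 f(x)\,\overline{\tfrac{1}{1-\bar z}\,x^{\frac{\bar z}{1-\bar z}}}\,dx .
\]
Because $x$ ranges over $(0,1)$ and is real, $x^w = e^{w\ln x}$ with $\ln x$ real, so $\overline{x^w} = x^{\bar w}$; together with $\overline{(1-\bar z)^{-1}} = (1-z)^{-1}$ and $\overline{\bar z/(1-\bar z)} = z/(1-z)$ this turns the integrand into $f(x)\,\frac{1}{1-z}\,x^{\frac{z}{1-z}}$, which is exactly the claimed formula after pulling the constant out of the integral.

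The only points requiring care are bookkeeping rather than genuine obstacles. I should confirm that the integral converges absolutely: for $z \in \d$ the exponent $\frac{z}{1-z}$ lies in $\s$, since $1 + 2\Re\frac{z}{1-z} = \Re\frac{1+z}{1-z} = \frac{1-|z|^2}{|1-z|^2} > 0$ forces $\Re\frac{z}{1-z} > -\frac12$, so $x^{z/(1-z)} \in \ltwo$ and Cauchy--Schwarz applies; this is precisely the membership underlying Lemma~\ref{sa.lem.10}. The one genuinely error-prone spot is tracking the pattern of complex conjugations through the adjoint and the inner product, so I would write those steps out in full; everything else is immediate.
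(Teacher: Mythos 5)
Your proposal is correct and follows exactly the paper's own argument: the reproducing-kernel identity $Uf(z) = \ip{Uf}{k_z}_{\htwo}$, unitarity of $U$, and the formula for $U^*k_z$ from Lemma \ref{sa.lem.10}, with the conjugations cancelling as you describe. The additional checks you flag (that $\overline{x^w} = x^{\bar w}$ for $x \in (0,1)$, and that $\frac{z}{1-z} \in \s$ so the integral converges) are sound bookkeeping that the paper leaves implicit.
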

\begin{proof}
\begin{align*}
Uf (z) &= \ip{Uf}{k_z}_{\htwo}\\
&=\ip{f}{U^* k_z}_{\ltwo}\\
(Lemma \eqref{sa.lem.10})\qquad &=
\ip{f}{\frac{1}{1-\bar z} x^{\tfrac{\bar z}{1-\bar z}}}_{\ltwo}\\
&=\frac{1}{1-z} \int_0^1 f(x) x^{\frac{z}{1-z}}dx
\end{align*}
\end{proof}
To obtain a nonrigorous, but highly interesting proof of the proposition, 
let us 
 define the Sarason Transform 
  of  a measure $\mu$ on $[0,1]$, $\mathcal{S} \{\mu\}$, to be the holomorphic function
\[
\mathcal{S} \{\mu\} (z) = \frac{1}{1-z} \int x^{\frac{z}{1-z}}d\mu (x),\qquad |z|< 1.
\]
Note that \[
\mathcal{S}(\chi_{[0,s]}) = \sqrt s\  e^{  \ln \sqrt{s}\  \frac{1+z}{1-z}} = U (\chi_{[0,s]}) .
\]
We have
\[
\mathcal{S} \{ \delta_s \} \= \frac{1}{1-z} s^{\frac{1}{1-z}-1}=\frac{1}{1-z} s^{\frac{z}{1-z}}.
\]
So formally we get
\[
Uf = \mathcal{S}  (\int_0^1 f(x)\delta_x\ dx)=\int_0^1 f(x)  \mathcal{S} (\delta_x)\ dx=\int_0^1 f(x) \frac{1}{1-z} x^{\frac{z}{1-z}}dx,
\]
the formula in Proposition 2.17.

%Perhaps the argument in the previous paragraph could be made rigorous by extending the Sarason Transform to more general objects; possibly measures or even distributions. For example, if $mu$ is a measure on $[0,1]$, we could
%We would then have that $\sze{fdx}$ would analytically continue to $Uf$ when $f\in \ltwo$.
%\begin{ques}
%Is it possible that $\mathcal{A}$ transformed via the Sarason transform has an elegant characterization if we change norms or work in a distribution setup? The $ltwo - \htwo$ setup looks to get quite messy as compound operators are not built up from simple operators in a messy way.
%\end{ques}

\subsection{Transforms of $H, V$ and $X$}

The Hardy operator $H$ was defined by \eqref{eqin2}. Let $X$ denote multiplication by $x$ on $\ltwo$, and
define the 
 Volterra operator $V: \ltwo \to \ltwo$  by $V = X H$, so
\[
V f(x) \= \int_0^x f(t) dt .
\]
All three of these are monomial operators, and have simple descriptions in terms of monomials.
\begin{eqnarray*}
H : x^n &\ \mapsto\ & \frac{1}{n+1} x^n \\
X: x^n &\mapsto & x^{n+1} \\
V: x^n &\mapsto & \frac{1}{n+1} x^{n+1} .
\end{eqnarray*}
If $T$ is a bounded operator on $\ltwo$, let us write $\widehat{T} =  U  T U^*$ for the
unitarily equivalent operator on $\htwo$. Monomial operators then become operators that map kernel 
functions to multiples of other kernel functions, which are adjoints of  weighted composition operators.
For more about weighted composition operators, see 
e.g. \cite{fo64, coga06, krmo07, coko10, bona10, acks18, cp21}.

We shall let $M_g$ denote the operator of multiplication by $g$, and $C_\beta$ denote composition with 
$\beta$. It was observed in \cite{guna08} that 
it is possible
for the product $M_g C_\beta$ to be bounded even when $M_g$ is not.
The following theorem is proved in \cite{amMO}.
\begin{thm}
\label{thmbmo}
The operator $T: \ltwo \to \ltwo$ is a monomial operator if and only if $ \widehat{T^*}: \htwo \to \htwo$ is
a bounded operator 
of the form $M_g C_\beta $ for some holomorphic $\beta: \D \to \D$ and some $g \in \htwo$.
\end{thm}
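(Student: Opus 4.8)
The plan is to translate the monomial condition on $T$ into a statement about how $\widehat{T}$ acts on the Szeg\"o kernels, and then to recognize that statement as precisely the adjoint of a weighted composition operator. The bridge is Lemma \ref{sa.lem.10}: $U x^s = \frac{1}{s+1} k_{\frac{\bar s}{\bar s+1}}$ and $U^* k_\alpha = \frac{1}{1-\bar\alpha} x^{\frac{\bar\alpha}{1-\bar\alpha}}$, so $U$ sets up a correspondence between monomials in $\ltwo$ and kernel functions in $\htwo$. The one elementary fact I would record first is that $s \mapsto \frac{\bar s}{\bar s+1}$ carries $\s$ bijectively onto $\d$, with inverse $\alpha \mapsto \frac{\bar\alpha}{1-\bar\alpha}$, since $w \mapsto \frac{1+w}{1-w}$ is the Cayley transform of $\d$ onto the right half-plane and $\re \frac{w}{1-w} > -\tfrac12$ exactly when $w \in \d$.

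For the forward direction, suppose $T$ is a monomial operator, so $T x^s = c(s) x^{\tau(s)}$ with $\tau(s) \in \s$. Applying $U$ and invoking the lemma gives $\widehat{T}\, k_{\frac{\bar s}{\bar s+1}} = \lambda(s)\, k_{\frac{\overline{\tau(s)}}{\overline{\tau(s)}+1}}$ for an explicit scalar $\lambda(s)$. Because $s \mapsto \frac{\bar s}{\bar s+1}$ is onto $\d$, this says exactly that $\widehat{T}$ sends every kernel function $k_\alpha$ to a scalar multiple $\lambda(\alpha)\, k_{\beta(\alpha)}$ of a kernel function.

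I then claim that any bounded $R$ on $\htwo$ with $R k_\alpha = \lambda(\alpha) k_{\beta(\alpha)}$ for all $\alpha$ satisfies $R^* = M_g C_\beta$. For $f \in \htwo$ one computes
\[
(R^* f)(\alpha) = \ip{R^* f}{k_\alpha}_{\htwo} = \ip{f}{R k_\alpha}_{\htwo} = \overline{\lambda(\alpha)}\, f(\beta(\alpha)).
\]
Taking $f \equiv 1$ shows $g := R^*\mathbf{1} = \overline{\lambda}$ is holomorphic and lies in $\htwo$; taking $f(z) = z$ shows $g \cdot \beta = R^*(\mathrm{id})$ is holomorphic. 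Where $g \neq 0$ we get $\beta = R^*(\mathrm{id})/g$, and since $\beta$ is bounded by $1$ the isolated zeros of $g$ are removable, so $\beta : \d \to \d$ is holomorphic. The displayed identity then reads $(R^* f)(\alpha) = g(\alpha) f(\beta(\alpha)) = (M_g C_\beta f)(\alpha)$ off the zeros of $g$, hence everywhere by analytic continuation, giving $\widehat{T^*} = (\widehat{T})^* = M_g C_\beta$.

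The converse reverses this chain: if $\widehat{T^*} = M_g C_\beta$ is bounded, then $\widehat{T} k_\alpha = \overline{g(\alpha)}\, k_{\beta(\alpha)}$, and with $\alpha_s = \frac{\bar s}{\bar s+1}$ one has $T x^s = \tfrac{1}{s+1} U^* \widehat{T} k_{\alpha_s}$, which by Lemma \ref{sa.lem.10} is a scalar multiple of the monomial $x^{\tau(s)}$ with $\tau(s) = \frac{\overline{\beta(\alpha_s)}}{1-\overline{\beta(\alpha_s)}} \in \s$; boundedness of $T$ follows from that of $\widehat{T^*}$ by unitary equivalence. The main obstacle is the middle step, namely proving that $\beta$ is holomorphic and $g \in \htwo$: this is the standard characterization of weighted composition operators through the action of the adjoint on reproducing kernels, and the only real care needed is the removable-singularity argument at the zeros of $g$, together with handling the degenerate case $g \equiv 0$ (equivalently $T = 0$) separately.
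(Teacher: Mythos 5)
Your proof is correct. Note that the paper itself does not contain a proof of Theorem \ref{thmbmo} (it defers to \cite{amMO}), but the route you take --- conjugating by the Sarason transform so that monomials correspond to Szeg\"o kernels via Lemma \ref{sa.lem.10}, and then invoking the reproducing-kernel characterization of adjoints of weighted composition operators, with the removable-singularity argument at the zeros of $g$ and the degenerate case $g\equiv 0$ handled separately --- is exactly the mechanism the paper sketches in the discussion preceding the theorem, so there is nothing to flag.
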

Let $\gamma(z) = \frac{1}{2-z}$. This maps $\D$ to $\D$, and maps $\frac{n}{n+1}$ to $\frac{n+1}{n+2}$.
Using Lemma \ref{sa.lem.10} and the preceding formulas, it is  easy to verify the following. We shall let $S$ denote the unilateral shift, the operator of multiplication by $z$ on $\htwo$.
\begin{prop}
\label{prophat}
We have
\begin{eqnarray*}
\widehat{H} &\=& 1 - S^* \\
\widehat{X} &\=& S^* C_\gamma^* \\
\widehat{V} &=& (1 - S^*)C_\gamma^*.
\end{eqnarray*}
\end{prop}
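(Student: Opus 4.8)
The plan is to verify each of the three identities by testing against the kernel functions $k_{\frac{n}{n+1}}$, since these span a dense subspace of $\htwo$ (the points $\frac{n}{n+1}$ form a set of uniqueness, as recalled earlier). The key observation is that the Sarason transform converts the monomial action of $H$, $X$, and $V$ into an action on the normalized kernels $U(x^n)=\frac{1}{n+1}k_{\frac{n}{n+1}}$, so for any monomial operator $T$ with $Tx^n=c_n x^{m(n)}$ we have the explicit formula $\widehat T \big(\frac{1}{n+1}k_{\frac{n}{n+1}}\big)=UTU^*U(x^n)=UT(x^n)=c_n\, U(x^{m(n)})=\frac{c_n}{m(n)+1}k_{\frac{m(n)}{m(n)+1}}$. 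This reduces each verification to a short computation on kernels.

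First I would handle $\widehat H$. Since $Hx^n=\frac{1}{n+1}x^n$, the formula above gives $\widehat H\,\frac{1}{n+1}k_{\frac{n}{n+1}}=\frac{1}{n+1}\cdot\frac{1}{n+1}k_{\frac{n}{n+1}}$, so $\widehat H\, k_{\frac{n}{n+1}}=\frac{1}{n+1}k_{\frac{n}{n+1}}$. On the other hand, the backward shift satisfies $S^* k_\alpha=\bar\alpha\, k_\alpha$, so with $\alpha=\frac{n}{n+1}$ one gets $(1-S^*)k_{\frac{n}{n+1}}=\big(1-\frac{n}{n+1}\big)k_{\frac{n}{n+1}}=\frac{1}{n+1}k_{\frac{n}{n+1}}$. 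The two agree on a total set, giving $\widehat H=1-S^*$.

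Next I would treat $\widehat X$. From $Xx^n=x^{n+1}$ the kernel formula yields $\widehat X\, k_{\frac{n}{n+1}}=(n+1)\cdot\frac{1}{n+2}k_{\frac{n+1}{n+2}}=\frac{n+1}{n+2}k_{\frac{n+1}{n+2}}$. I must match this against $S^*C_\gamma^*$. The composition operator $C_\gamma$ satisfies $C_\gamma^* k_\alpha=k_{\gamma(\alpha)}$, and since $\gamma$ maps $\frac{n}{n+1}$ to $\frac{n+1}{n+2}$ we get $C_\gamma^* k_{\frac{n}{n+1}}=k_{\frac{n+1}{n+2}}$; applying $S^*$ then multiplies by the eigenvalue $\frac{n+1}{n+2}$, reproducing the same expression. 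The identity $\widehat V=(1-S^*)C_\gamma^*$ follows either from the same bookkeeping applied to $Vx^n=\frac{1}{n+1}x^{n+1}$, or most cleanly from $V=XH$ together with the already-established formulas, noting that $\widehat V=\widehat X\,\widehat H=S^*C_\gamma^*(1-S^*)$ and then checking this equals $(1-S^*)C_\gamma^*$ on kernels.

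The only genuine subtlety, and the step I would present most carefully, is the behavior of $C_\gamma^*$ on the Szeg\H o kernels, i.e.\ confirming $C_\gamma^* k_\alpha=k_{\gamma(\alpha)}$ and verifying the precise commutation between $S^*$ and $C_\gamma^*$ needed for the $\widehat V$ identity; the eigenvalue computations for $H$ and $X$ are routine once this is in hand. Since all three verifications take place on the total set $\{k_{\frac{n}{n+1}}\}$, boundedness of the operators (guaranteed because $H$, $X$, $V$ are bounded monomial operators, via Theorem \ref{thmbmo}) lets the kernel-level identities extend to all of $\htwo$ by density.
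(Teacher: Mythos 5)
Your proposal is correct and is essentially the paper's own argument: the paper gives no detailed proof, saying only that the identities are "easy to verify" using Lemma \ref{sa.lem.10} and the monomial formulas for $H$, $X$, $V$, which amounts to precisely the kernel-function verification you carry out (testing on the total set $\{k_{\frac{n}{n+1}}\}$ rather than on all $k_\alpha$ via Lemma \ref{sa.lem.10} is an immaterial difference, since $\{\tfrac{n}{n+1}\}$ is a set of uniqueness and all operators involved are bounded). Your key computations --- $S^*k_\alpha = \bar\alpha k_\alpha$, $C_\gamma^* k_\alpha = k_{\gamma(\alpha)}$, $\gamma(\tfrac{n}{n+1}) = \tfrac{n+1}{n+2}$, and the resulting eigenvalue bookkeeping --- are all correct; the only cosmetic quibble is that boundedness of $H$, $X$, $V$ needs no appeal to Theorem \ref{thmbmo} ($H$ is bounded by Hardy's inequality and $X$ is a contraction).
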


The fact that $1-H$ is unitarily equivalent to a backward shift operator was first proved in \cite{bhs65},
and a proof similar to ours is in \cite{kmp06}.

\subsection{The Sarason Transform and ${\rm Lat}(V)$}

The invariant subspaces of the Volterra operator were described by Brodskii \cite{br57} and Donoghue  \cite{don57}.
\begin{thm}
The space $\m \subseteq \ltwo$ is a closed invariant subspace for $V$ if and only if $\m = \ltworho$ for
some $\rho \in [0,1]$.
\end{thm}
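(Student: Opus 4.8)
The plan is to prove the two inclusions separately; the forward inclusion is routine, and the reverse rests on Titchmarsh's convolution theorem. For the easy direction I would check directly that each $\ltworho$ is invariant: if $f\in\ltwo$ vanishes a.e.\ on $[0,\rho]$, then $Vf(x)=\int_0^x f(t)\,dt=\int_\rho^x f(t)\,dt$ vanishes for $x\le\rho$, so $Vf\in\ltworho$ as well. Hence every $\ltworho$ lies in $\mathrm{Lat}(V)$.

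For the converse, let $\m\ne\{0\}$ be a closed $V$-invariant subspace. To each nonzero $h\in\ltwo$ I attach the left endpoint of its essential support, $\alpha(h)=\sup\{a:h=0\text{ a.e.\ on }[0,a]\}$, and set $\rho=\inf\{\alpha(f):f\in\m,\ f\ne0\}$. Since every $f\in\m$ vanishes a.e.\ on $[0,\alpha(f)]\supseteq[0,\rho]$, we get $\m\subseteq\ltworho$ at once. The entire content is therefore to establish the reverse inclusion $\ltworho\subseteq\m$, which I would do by showing that the only $g\in\ltworho$ orthogonal to $\m$ is $g=0$.

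So fix $f\in\m$ and suppose $g\perp\{V^nf:n\ge0\}$. Writing $V^nf=k_n*f$ with $k_n(t)=t^{n-1}/(n-1)!$ and extending $f,g$ by zero to $\mathbb{R}$, the relations $\la V^nf,g\ra=0$ say precisely that all the moments $\int_0^1 t^{\,k}\psi(t)\,dt$ of the correlation $\psi=g\star\tilde f$ vanish; since polynomials are dense in $\ltwo$, this forces $\psi\equiv0$ on $[0,1]$. After a reflection $x\mapsto 1-x$ and a conjugation, $\psi$ becomes a genuine one-sided convolution of $Rg$ and $\overline f$, so the vanishing of $\psi$ on $[0,1]$ is exactly the hypothesis of Titchmarsh's convolution theorem on the interval $[0,1]$. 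That theorem yields $\mu,\nu\ge0$ with $\mu+\nu\ge1$ such that $\overline f=0$ on $[0,\nu]$ and $Rg=0$ on $[0,\mu]$. Because $\nu\le\alpha(f)$ by the definition of $\alpha$, we obtain $\mu\ge 1-\alpha(f)$, which unwinds to $g=0$ a.e.\ on $[\alpha(f),1]$.

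To finish, I would apply this to a sequence $f_n\in\m$ with $\alpha(f_n)\downarrow\rho$ (such a sequence exists by the definition of $\rho$ as an infimum). Any $g\in\ltworho$ with $g\perp\m$ then vanishes on each $[\alpha(f_n),1]$, hence on $(\rho,1]$, so $g=0$. Thus $\m^\perp\cap\ltworho=\{0\}$, and combined with $\m\subseteq\ltworho$ this gives $\m=\ltworho$. The main obstacle is plainly the step invoking Titchmarsh's convolution theorem: both verifying that the orthogonality conditions collapse to the vanishing of a one-sided convolution on $[0,1]$ (the reflection/conjugation bookkeeping together with the moment-to-zero argument) and, more seriously, the theorem itself, whose proof goes through the Paley--Wiener theory of entire functions of exponential type. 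An alternative I would keep in reserve is to transport everything to $\htwo$ via the Sarason transform, where $\widehat V=(1-S^*)C_\gamma^*$ by Proposition \ref{prophat} and, by Lemma \ref{lemchi}, $U\ltworho$ is generated by the singular inner functions $E_w$; one could then try to read off $\mathrm{Lat}(V)$ from the model-space description of the backward shift, though reconciling the composition operator $C_\gamma^*$ with Beurling's theorem looks at least as delicate as the Titchmarsh route.
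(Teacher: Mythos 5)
Your proof is correct, but there is nothing in the paper to compare it with: the paper does not prove this statement at all, quoting it as a classical theorem of Brodskii \cite{br57} and Donoghue \cite{don57}. What you have reconstructed is essentially Donoghue's original argument, and every step checks out. The invariance of $\ltworho$ is immediate; the identity $\la V^n f,g\ra=\int_0^1 \frac{u^{n-1}}{(n-1)!}\,\psi(u)\,du$, where $\psi(u)=\int_0^{1-u}f(t)\,\overline{g(t+u)}\,dt$, is correct; since $\psi$ is continuous, the vanishing of all its moments forces $\psi\equiv 0$ on $[0,1]$; the reflection identity $\psi(u)=(f*\overline{Rg})(1-u)$ converts this to the vanishing of a one-sided convolution on $[0,1]$, so Titchmarsh's theorem gives $\alpha(f)+\alpha(Rg)\ge 1$; and your inequality $\mu\ge 1-\nu\ge 1-\alpha(f)$ correctly unwinds to $g=0$ a.e.\ on $[\alpha(f),1]$. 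The final passage to a sequence $\alpha(f_n)\downarrow\rho$, combined with the a priori inclusion $\m\subseteq\ltworho$, is also sound, since a closed subspace of $\ltworho$ whose orthogonal complement within $\ltworho$ is trivial must equal $\ltworho$. Two remarks. First, as you acknowledge, the entire weight rests on Titchmarsh's convolution theorem; invoking it is legitimate (it is classical and independent of anything in this paper), and it is exactly how Donoghue proceeded, so you have produced the standard proof rather than a new route. Second, the alternative you keep in reserve is also classical: Sarason proved precisely this theorem by Hardy-space methods, and within this paper the natural ingredients would be Lemma \ref{lemchi} and Lemma \ref{lemsin}, which identify $U\,{\rm L}^2([s,1])$ with the model space $(\Phi_s \htwo)^\perp$; the obstruction you flag, namely the composition factor in $\widehat V=(1-S^*)C_\gamma^*$ from Proposition \ref{prophat}, is the genuine reason that route needs more than a direct appeal to Beurling. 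Your Titchmarsh argument has the added virtue of being intrinsic to $\ltwo$, which is the kind of proof the paper says it prefers (see the Open Problem in Section \ref{subsecpt}).
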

 How do these spaces transform under the Sarason Transform?

For $s\in (0,1]$ let $\Phi_s$ be the singular inner function defined by
\[
\Phi_s (z) = e^{\tfrac12 \ln s \frac{1+z}{1-z}},\qquad z \in \d.
\]
For $s\in [0,1]$, define orthogonal projections $P^\pm_s$ on $\ltwo$ by the formulas
\[
P^-_s f= \chi_{[0,s]} f\ \ \text{ and }\ \ P^+_s f= \chi_{[s,1]} f,\qquad f \in \ltwo.
\]
\begin{lem}
\label{lemsin}
\[
U \ran P_s^- = \Phi_s \htwo \text{ and } U \ran P^+_s = \Phi_s {\htwo}^{\perp}
\]
\end{lem}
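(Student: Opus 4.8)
The plan is to establish the first identity $U\ran P_s^- = \Phi_s\htwo$ directly, and then obtain the second for free. Since $\chi_{[0,s]}$ and $\chi_{[s,1]}$ have disjoint supports, $P_s^-$ and $P_s^+$ are complementary orthogonal projections, so $\ran P_s^-\oplus\ran P_s^+=\ltwo$. As $U$ is unitary this gives $\htwo=U\ran P_s^-\oplus U\ran P_s^+$, whence $U\ran P_s^+=(U\ran P_s^-)^\perp$. Thus, granting the first identity, the second reads $U\ran P_s^+=(\Phi_s\htwo)^\perp=\Phi_s\htwo^\perp$, exactly as asserted, and no further work is needed for it.

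For the first identity I would argue with generators. The range of $P_s^-$ is the set of functions in $\ltwo$ supported on $[0,s]$; since step functions are dense there and $\chi_{[a,b]}=\chi_{[0,b]}-\chi_{[0,a]}$, the family $\{\chi_{[0,t]}:0<t\le s\}$ has dense linear span in $\ran P_s^-$. By Lemma \ref{lemchi}, $U\chi_{[0,t]}=\sqrt t\,\Phi_t$, so, dropping the nonzero scalars and using that the unitary $U$ carries dense spans to dense spans, I obtain $U\ran P_s^- = \overline{\spn}\{\Phi_t:0<t\le s\}$. The computation is now driven by the semigroup identity $\Phi_a\Phi_b=\Phi_{ab}$ for $a,b\in(0,1]$, immediate from $\Phi_u(z)=e^{\frac12\ln u\,\frac{1+z}{1-z}}$. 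For $0<t\le s$ we have $\Phi_t=\Phi_s\Phi_{t/s}$ with $t/s\in(0,1]$, so $\Phi_{t/s}$ is a bounded singular inner function and $\Phi_t\in\Phi_s\htwo$; this yields the inclusion $U\ran P_s^-\subseteq\Phi_s\htwo$.

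For the reverse inclusion, the same computation with $s=1$ — that is, applying $U$ to the total family $\{\chi_{[0,u]}:0<u\le 1\}$ of $\ltwo$ — shows that $\{\Phi_u:0<u\le 1\}$ is total in $\htwo$. Because $\Phi_s$ is inner, $M_{\Phi_s}$ is an isometry on $\htwo$; hence it sends closed linear spans to closed linear spans and carries a total family of $\htwo$ onto a total family of its range $\Phi_s\htwo$. Applying $M_{\Phi_s}$ to $\{\Phi_u\}$ and using $\Phi_s\Phi_u=\Phi_{su}$ gives $\Phi_s\htwo=\overline{\spn}\{\Phi_{su}:0<u\le 1\}=\overline{\spn}\{\Phi_t:0<t\le s\}=U\ran P_s^-$, which combined with the previous paragraph proves the first identity. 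The one step deserving care is precisely this reverse inclusion, namely the totality of $\{\Phi_u:u\in(0,1]\}$ in $\htwo$; I expect it to be the crux, but it is dispatched simply by transporting, through the unitary $U$, the elementary fact that the indicators $\chi_{[0,u]}$ are total in $\ltwo$. The remaining bookkeeping — that multiplication by the inner function $\Phi_s$ preserves closed spans and totality — is routine.
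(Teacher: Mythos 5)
Your proof is correct, but it takes a genuinely different route from the paper's. The paper argues structurally: $\ran P_s^-$ is invariant under $H^*$, so by Proposition \ref{prophat} the space $U \ran P_s^-$ is invariant under the shift $S$; Beurling's theorem \cite{beu} then gives $U \ran P_s^- = u\htwo$ for some inner function $u$, and $u$ is identified as a constant multiple of the projection of $1$ onto that subspace, which by Lemma \ref{lemchi} (together with $U1 = 1$) equals $\sqrt{s}\,\Phi_s$. You instead avoid Beurling's theorem entirely: you push the total family $\{\chi_{[0,t]} : 0 < t \le s\}$ through $U$ via Lemma \ref{lemchi} to get $U\ran P_s^- = \overline{\spn}\{\Phi_t : 0 < t \le s\}$, and then identify this closed span with $\Phi_s\htwo$ using the semigroup law $\Phi_a\Phi_b = \Phi_{ab}$ and the fact that multiplication by the inner function $\Phi_s$ is an isometry carrying closed spans to closed spans (the reverse inclusion being the $s=1$ case, i.e.\ totality of $\{\Phi_u : 0 < u \le 1\}$ in $\htwo$, transported from totality of the indicators in $\ltwo$). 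The trade-off is clear: your argument is more elementary and self-contained, needing only Lemma \ref{lemchi} and unitarity of $U$, and it even reproves the relevant instance of Beurling's theorem by hand; the paper's argument is shorter and sits naturally in its overarching framework, where the Sarason transform turns ${\rm Lat}(H)$ into shift-coinvariant (model) spaces and Beurling's theorem is the organizing principle — a perspective that is reused in the proofs of Theorems \ref{thmdecomp} and \ref{thmsing2}. A small bonus of your write-up: you make explicit the deduction of the second identity $U\ran P_s^+ = (\Phi_s\htwo)^\perp$ from the first via unitarity, a step the paper leaves implicit.
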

\begin{proof}
As $\ran P_s^-$ is invariant for $H^*$, it follows from Proposition \ref{prophat} that
$U \ran P_s^-$ is invariant for the shift $S$, and is therefore of the form $u \htwo$ for some inner function $u$
by Berurling's theorem \cite{beu}. Moreover, $u$ is a constant multiple of the projection of $1$ onto the invariant subspace.  By Lemma \ref{lemchi}, the projection of $1$  is $\sqrt{s} \Phi_s$, so $u = \Phi_s$.
\end{proof}

\subsection{The Sarason Transform and the Laplace Transform}
Recall that the Laplace Transform is defined by the formula
\[
\lap{f}(s)=\int_0^\infty e^{-st}f(t)dt.
\]
Further, if for $f\in\ltwo$ we define $f^\sim$ by the formula
\[
f^\sim(t)=e^{-\frac{t}{2}}f(e^{-t}),\qquad t\in(o,\infty),
\]
then the assignment $f\mapsto f^\sim$ is a Hilbert space isomorphism from $\ltwo$ onto $\ltwoinf$. By making the substitution $x=e^{-t}$ we find that
\begin{align*}
\int_0^1 f(x) x^{\frac{z}{1-z}}dx
&=\int_{\infty}^0 f(e^{-t})\ e^{-t\frac{z}{1-z}}\ (-e^{-t})\ dt\\
&=\int^{\infty}_0 e^{-\frac{t}{2}}f(e^{-t})\ e^{-t(\frac{z}{1-z}+\frac12)}\ dt\\
&=\int^{\infty}_0 f^\sim(t)\ e^{-t( \frac12\frac{1+z}{1-z})}\ dt\\
&=\lap{f^\sim}(\frac12\frac{1+z}{1-z}).
\end{align*}
Hence,
\[
Uf(z) = \frac{1}{1-z}\lap{f^\sim}(\frac12\frac{1+z}{1-z}).
\]

%Since $U$ is unitary, this formula implies that $\lap$ is a unitary transformation from $\ltwoinf$ to the Hardy space of the right half plane.
% Presumably this fact is well known.
 So after changes of variable from $\ltwo$ of the disc to $\ltwoinf$ and from $\htwo$ to $\htwo$ of the right half plane, the Sarason Transform is simply the Laplace Transform.
 
\section{The Inverse Sarason Transform}

We know from Proposition \ref{prophat} that $1 - H^*$ is unitarily equivalent to the unilateral shift.
Let us find what the orthonormal basis $z^n$ in $H^2$ corresponds to. 
This was first done in \cite{kmp06}. In this section we shall use the 
notation $ \phi \sim g$ to mean that the function $f \in \ltwo$ is mapped to $\phi \in H^2$ by the Sarason transform.

\begin{lem}\label{inv.lem.10}
\be\label{inv.10}
(H^*)^j \ 1 = (-1)^j \frac{(\ln x)^j}{j!}
\ee
\end{lem}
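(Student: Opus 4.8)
The plan is to reduce everything to an explicit formula for the adjoint $H^*$ followed by a one-line induction on $j$. First I would compute $H^*$. For $f,g \in \ltwo$ we have
\[
\ip{Hf}{g} = \int_0^1 \Big( \frac{1}{x}\int_0^x f(t)\, dt\Big)\, \overline{g(x)}\, dx,
\]
and interchanging the order of integration over the triangle $\{0 \le t \le x \le 1\}$ (legitimate since $H$ is bounded, so the integrand is absolutely integrable by Cauchy--Schwarz) gives
\[
\ip{Hf}{g} = \int_0^1 f(t)\, \overline{\int_t^1 \frac{g(x)}{x}\, dx}\, dt .
\]
Since $1/x$ is real this identifies the adjoint as
\[
H^* g(x) = \int_x^1 \frac{g(t)}{t}\, dt .
\]

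With this formula in hand, I would prove \eqref{inv.10} by induction on $j$. The base case $j=0$ is the identity $(H^*)^0 \, 1 = 1 = (-1)^0 (\ln x)^0 / 0!$. For the inductive step, assume $(H^*)^j \, 1 = (-1)^j (\ln x)^j / j!$ and apply $H^*$:
\[
(H^*)^{j+1} \, 1 = \frac{(-1)^j}{j!} \int_x^1 \frac{(\ln t)^j}{t}\, dt .
\]
The key computation is the antiderivative $\int (\ln t)^j \, t^{-1}\, dt = (\ln t)^{j+1}/(j+1)$, which follows from the chain rule. Evaluating from $x$ to $1$ and using $\ln 1 = 0$ to kill the upper endpoint yields
\[
\int_x^1 \frac{(\ln t)^j}{t}\, dt = -\frac{(\ln x)^{j+1}}{j+1},
\]
so that $(H^*)^{j+1}\, 1 = (-1)^{j+1} (\ln x)^{j+1}/(j+1)!$, completing the induction.

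There is no serious obstacle here; the proof is essentially a routine computation. The only two points that require any care are getting the adjoint right — in particular keeping track of the conjugate and of the reversed limits of integration that turn the Hardy operator's lower integral into $H^*$'s upper integral — and observing that the boundary contribution at the upper limit vanishes precisely because $\ln 1 = 0$, which is exactly what produces the clean closed form rather than an inhomogeneous recursion.
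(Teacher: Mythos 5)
Your proof is correct and follows essentially the same route as the paper's: induction on $j$, applying the formula $H^*g(x) = \int_x^1 \frac{g(t)}{t}\,dt$ and integrating $(\ln t)^j/t$ (the paper does this by the substitution $u = \ln t$, you by the antiderivative $(\ln t)^{j+1}/(j+1)$, which is the same computation). The only addition is your Fubini derivation of the adjoint formula, which the paper uses without comment; including it is a harmless, and arguably welcome, extra.
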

\begin{proof}
We proceed by induction. Clearly, \eqref{inv.10} holds when $j=0$. Assume $j \ge 0$ and \eqref{inv.10} holds. Then
\begin{align*}
(H^*)^{j+1} \ 1 &= H^* ((H^*)^j \ 1)\\ 
&= \frac{(-1)^j}{j!} H^*(\ln x)^j\\
&=\frac{(-1)^j}{j!}\int_x^1 \frac{(\ln t)^j}{t} dt\\ 
&=\frac{(-1)^j}{j!}\int_{\ln x}^0 u^j du\\ 
&=(-1)^{j+1} \frac{(\ln x)^{j+1}}{(j+1)!}
\end{align*}
\end{proof}
The following result is proved in \cite{kmp06}; we include a proof for expository reasons.
\begin{lem}\label{inv.lem.20}
\[
z^n \sim \sum_{j=0}^n \binom{n}{j}\frac{(\ln x)^j}{j!}
\]
\end{lem}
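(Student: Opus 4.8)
The plan is to recognize the right-hand sum as the binomial expansion of $(1-H^*)^n$ applied to the constant function $1$, and then to transport this expression across the Sarason transform using Proposition \ref{prophat}. Concretely, writing $e_n$ for the function $\sum_{j=0}^n \binom{n}{j}\frac{(\ln x)^j}{j!}$, the goal is to show $U e_n = z^n$.

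First I would rewrite $e_n$ operator-theoretically. By Lemma \ref{inv.lem.10} we have $(H^*)^j\,1 = (-1)^j \frac{(\ln x)^j}{j!}$, so that $\frac{(\ln x)^j}{j!} = (-H^*)^j\,1$. Since $1$ (the identity) and $H^*$ commute, the binomial theorem then gives
\[
e_n = \sum_{j=0}^n \binom{n}{j}\frac{(\ln x)^j}{j!} = \sum_{j=0}^n \binom{n}{j}(-H^*)^j\,1 = (1 - H^*)^n\,1 .
\]

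Next I would apply $U$ and use unitarity to conjugate the operator. Because $U U^* = 1$, we have $U(1-H^*)^n\,1 = \big(U(1-H^*)U^*\big)^n\,U1 = (1 - \widehat{H^*})^n\,U1$. From Proposition \ref{prophat}, $\widehat{H} = 1 - S^*$; taking adjoints yields $\widehat{H^*} = 1 - S$, hence $1 - \widehat{H^*} = S$. Therefore $U e_n = S^n\,U1$. Finally I would compute the base case directly from Definition \ref{defst}: $U1 = U(x^0) = k_0$, and $k_0(z) = \tfrac{1}{1 - \bar 0\, z} = 1$, the constant function, which is $z^0$ in $\htwo$. Consequently $U e_n = S^n\,1 = z^n$, which is precisely the asserted relation $z^n \sim e_n$.

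I do not expect a genuine obstacle here; the argument is a short chain of identities. The only points that require care are getting the adjoint relation $\widehat{H^*} = 1 - S$ correct (it follows by taking adjoints in Proposition \ref{prophat}, not by any separate computation) and verifying that $U1$ is the constant function $1 = z^0$ rather than some other kernel. Everything else is the elementary binomial rearrangement enabled by Lemma \ref{inv.lem.10}.
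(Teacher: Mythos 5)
Your proof is correct and follows essentially the same route as the paper's: both rest on Lemma \ref{inv.lem.10}, the binomial expansion of $(1-H^*)^n\,1$, and the identity $S = 1 - \widehat{H}^*$ from Proposition \ref{prophat}. The only difference is cosmetic — you run the chain from $e_n$ to $z^n$ while the paper runs it from $z^n$ to $e_n$ — and you are somewhat more explicit about the adjoint step and the computation $U1 = k_0 = 1$, which the paper leaves implicit.
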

\begin{proof} We have $S = 1 - \widehat{H}^*$. Therefore,
\[
z^n=S^n \ 1 \sim (1-H^*)^n\  1.
\]
But using Lemma \ref{inv.lem.10},
\begin{align*}
(1-H^*)^n\  1 &= \sum_{j=0}^n (-1)^j\binom{n}{j}\ (H^*)^j\ 1\\
&=\sum_{j=0}^n (-1)^j\binom{n}{j} \big((-1)^j \frac{(\ln x)^j}{j!}\big)\\ 
&=\sum_{j=0}^n \binom{n}{j}\frac{(\ln x)^j}{j!}.
\end{align*}
\end{proof}
Define 
\[
e_n(x) \ = \ \sum_{j=0}^n \binom{n}{j}\frac{(\ln x)^j}{j!} .
\]
We just proved that $e_n = (1- H^*)^n 1$.
The fact that the functions $e_n$ are an orthonormal basis is already well-known.
Indeed, the Laguerre polynomials
\[
p_n(t) \ = \ \sum_{j=0}^n \binom{n}{j}\frac{(-t)^j}{j!} 
\]
are orthogonal polynomials of norm $1$ in $L^2([0,\infty))$ with weight $e^{-t}$.
By the change of variables $x = \ln \frac 1t$, we get immediately that $e_n(x)$ is an orthonormal basis
for $\ltwo$.

\begin{lem}\label{inv.lem.30}
If $f \in \htwo$ extends to be analytic on a neighborhood of 1, then
\[
f(z) \sim \sum_{j=0}^\infty f^{(j)}(1)\frac{(\ln x)^j}{(j!)^2}
\]
\end{lem}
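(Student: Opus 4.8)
The plan is to apply $U^*$ term by term to the Taylor expansion of $f$ about the point $z=1$, after first identifying the image of each monomial $(z-1)^j$ under $U^*$.

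First I would record the key identity $U^*[(z-1)^j] = (\ln x)^j/j!$. By Proposition \ref{prophat} we have $\widehat{H} = 1 - S^*$; taking adjoints gives $U H^* U^* = 1 - S$, hence $U^* S U = 1 - H^*$ and therefore $U^*(S-I)U = -H^*$. Since the constant function $\mathbf 1 \in \htwo$ satisfies $U^*\mathbf 1 = 1$ (because $U(1) = k_0 = \mathbf 1$), and since $(z-1)^j = (S-I)^j\mathbf 1$, the intertwining gives
\[
U^*[(z-1)^j] = \big(U^*(S-I)U\big)^j\,U^*\mathbf 1 = (-H^*)^j\,1 = (-1)^j (H^*)^j\,1 = \frac{(\ln x)^j}{j!},
\]
where the last equality is Lemma \ref{inv.lem.10}. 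Dividing by $j!$ yields $U^*[(z-1)^j/j!] = (\ln x)^j/(j!)^2$, which is exactly the $\ltwo$ function attached to the $j$-th Taylor coefficient on the right-hand side of the lemma.

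Since $f$ is analytic near $1$, it has the expansion $f(z) = \sum_{j=0}^\infty \tfrac{f^{(j)}(1)}{j!}(z-1)^j$. If this series converges to $f$ in the norm of $\htwo$, then applying the bounded operator $U^*$ term by term gives
\[
U^* f = \sum_{j=0}^\infty \frac{f^{(j)}(1)}{j!}\,U^*[(z-1)^j] = \sum_{j=0}^\infty f^{(j)}(1)\,\frac{(\ln x)^j}{(j!)^2},
\]
which is the assertion $f \sim \sum_j f^{(j)}(1)(\ln x)^j/(j!)^2$.

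The main obstacle is precisely the \emph{mode of convergence} of the Taylor series. When $f$ is analytic on a disc about $1$ of radius larger than $2$, so that $\overline{\D}$ lies inside the disc of convergence, the Taylor series converges uniformly on $\overline{\D}$ and hence in $\htwo$, and the term-by-term step above is immediate. For a general neighborhood of $1$ the radius about $1$ may be less than the diameter $2$ of $\D$, so the series diverges on an arc of $\t$ and cannot converge in $\htwo$; the identity must then be read pointwise. In that case I would instead set $g_1(x) = \sum_j \tfrac{f^{(j)}(1)}{(j!)^2}(\ln x)^j$, which by the Cauchy estimates $|f^{(j)}(1)| \le j!\,M\rho^{-j}$ is an everywhere-convergent power series in $\ln x$, show $g_1 \in \ltwo$, and compute its moments: for $n$ large enough that $1/(n+1)$ is below the radius of convergence,
\[
\int_0^1 x^n g_1(x)\,dx = \frac{1}{n+1}\sum_{j=0}^\infty \frac{f^{(j)}(1)}{j!}\Big(\frac{-1}{n+1}\Big)^{j} = \frac{1}{n+1}\,f\Big(\frac{n}{n+1}\Big),
\]
which matches the moments of $U^* f$ dictated by the moment--interpolation Proposition following Definition \ref{defst}. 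Equality $g_1 = U^* f$ then follows because $\{x^n : n \ge N_0\}$ is complete in $\ltwo$ by M\"untz--Sz\'asz (Example \ref{int.exam.10}), as $\sum_{n \ge N_0} 1/n = \infty$. The genuinely delicate point on this route is verifying $g_1 \in \ltwo$ when the radius about $1$ is at most $2$: the naive bound $|g_1(x)| \le M x^{-1/\rho}$ is not square-integrable, so one must exploit the cancellation in the alternating-sign series rather than estimate it termwise.
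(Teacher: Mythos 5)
Your first route is correct as far as it goes, and its key identity $U^*\left[(z-1)^j\right] = (\ln x)^j/j!$ is a clean shifted analogue of the paper's Lemma \ref{inv.lem.20} (which says $z^n \sim (1-H^*)^n\,1 = e_n$). But, as you yourself observe, that route proves the lemma only when the Taylor series of $f$ about $1$ converges in $\htwo$, i.e.\ essentially when $f$ is analytic on a disc about $1$ of radius exceeding $2$, whereas the hypothesis is merely analyticity on some neighborhood of $1$. Everything therefore rests on your second route, and that route stops exactly at the decisive point: you do not prove $g_1 \in \ltwo$. This is a genuine gap, not a technicality. The termwise bound $|g_1(x)| \le M x^{-1/\rho}$ is square-integrable precisely when $\rho > 2$, which is the same regime your first route already handles, so in all remaining cases the needed estimate must come from cancellation, and no such argument is supplied. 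Nor can you simply assume $g_1 \in \ltwo$: that membership is essentially part of what the lemma asserts, and your M\"untz--Sz\'asz density argument (which is otherwise fine, as is the moment computation) cannot even begin without it, since $g_1 - U^*f$ must be an element of $\ltwo$ before orthogonality to $\{x^n : n \ge N_0\}$ can force it to vanish. By its own admission, the proposal proves the lemma only in the easy case.

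For comparison, the paper expands $f$ about $0$ rather than about $1$: for any $f \in \htwo$ the series $\sum_n a_n z^n$ converges to $f$ in $\htwo$ norm, so the unitary $U^*$ can always be applied term by term, giving $U^*f = \sum_n a_n e_n$ in $\ltwo$ with $e_n = \sum_{j=0}^n \binom{n}{j}(\ln x)^j/j!$ by Lemma \ref{inv.lem.20}; the order of summation is then exchanged, and analyticity at $1$ enters only to identify the inner sums $\sum_{n \ge j} \frac{n!}{(n-j)!}\,a_n$ with $f^{(j)}(1)$. The choice of expansion point is exactly what dissolves the obstruction you ran into, since norm convergence of the expansion at $0$ is automatic for every $f \in \htwo$. (To be fair, the paper's exchange of summation is itself a formal step whose convergence is not justified when the radius of analyticity at $1$ is small, so the delicate regime is delicate for both arguments; but the paper's proof has no analogue of your unproved claim $g_1 \in \ltwo$ as a freestanding missing lemma.) If you want to salvage your route, the real work is precisely the cancellation estimate on $g_1$ that you deferred, and it is not obviously easier than the lemma itself.
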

\begin{proof}
If
\[
f(z) = \sum_{n=0}^\infty a_n z^n
\]
is the power series representation of $f$, then by Lemma \ref{inv.lem.20}
\begin{align*}
f(z) &\sim \sum_{n=0}^\infty a_n \big(\sum_{j=0}^n \binom{n}{j}\frac{(\ln x)^j}{j!}\big)\\ \\
&=\sum_{j=0}^\infty \frac{(\ln x)^j}{(j!)^2} \big(\sum_{n=j}^\infty \frac{n!}{(n-j)!}a_n\big)\\ \\
&=\sum_{j=0}^\infty f^{(j)}(1)\frac{(\ln x)^j}{(j!)^2}.
\end{align*}
\end{proof}
As a reality check let us let us verify the formula in Lemma \ref{sa.lem.10} using Lemma \ref{inv.lem.30}. Note that
\[
k_{\alpha}^{(j)}(1) =\frac{j!\  \bar\alpha^j}{(1-\bar\alpha)^{j+1}}
\]
Therefore, Lemma \ref{inv.lem.30} implies that
\begin{align*}
k_\alpha (z) &\sim \sum_{j=0}^\infty \frac{j!\  \bar\alpha^j}{(1-\bar\alpha)^{j+1}} \frac{(\ln x)^j}{(j!)^2}\\ 
&=\frac{1}{1-\bar\alpha}\ \sum_{j=0}^\infty\ \frac{1}{j!}\  \big(\frac{\bar\alpha}{1-\bar\alpha} \ln x\big)^j\\ 
&=\frac{1}{1-\bar\alpha}\  e^{\frac{\bar\alpha}{1-\bar\alpha} \ln x}\\ 
&=\frac{1}{1-\bar\alpha}\  x^{\frac{\bar\alpha}{1-\bar\alpha}}.
\end{align*}
The formula in Lemma \ref{inv.lem.30} reminds one of Bessel functions. Indeed,
\begin{align*}
e^{z-1} &\sim \sum_{j=0}^\infty \frac{(\ln x)^j}{(j!)^2}\\ 
&=\sum_{j=0}^\infty \frac{(-1)^j}{(j!)^2}(-\ln x)^j\\ 
&=\sum_{j=0}^\infty \frac{(-1)^j}{(j!)^2}\big(\frac{\sqrt{\ln x^{-4}}}{2}\big)^{2j}\\ 
&=J_0\big({\sqrt{\ln x^{-4}}}\big)
\end{align*}
\begin{prop}\label{inv.prop.10}
If $f \in \htwo$ extends to be analytic on a neighborhood of 1,
then $\Phi:(0,1] \to \c$ defined by
\[
\Phi(x) = U^*f(x),\qquad x\in (0,1],
\]
extends holomorphically to $\c \setminus (-\infty,0]$. Furthermore, if $f_{-1}$ denotes the function defined by $f_{-1}(z)=f(-z)$, then
\[
U^*f_{-1}(x)= \Phi(\frac{1}{x}),\qquad x \in (0,1].
\]
\end{prop}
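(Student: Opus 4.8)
The plan is to regard $\Phi$ as an entire function of the single variable $\ln x$, so that the inversion $x \mapsto 1/x$ becomes the sign change $\ln x \mapsto -\ln x$. First I would record the explicit representative of $\Phi$ furnished by Lemma \ref{inv.lem.30}: for $x \in (0,1]$,
\[
\Phi(x) = U^* f(x) = \sum_{j=0}^\infty f^{(j)}(1)\,\frac{(\ln x)^j}{(j!)^2}.
\]
Because $f$ is holomorphic on some disc of radius $r>0$ about $1$, Cauchy's estimates give $|f^{(j)}(1)| \le C\, j!/r^j$, so the coefficients of $g(w) := \sum_{j\ge 0} \frac{f^{(j)}(1)}{(j!)^2}\, w^j$ obey $\frac{|f^{(j)}(1)|}{(j!)^2} \le \frac{C}{j!\, r^j}$. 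The surplus factor $1/j!$ makes the radius of convergence of $g$ infinite, so $g$ is entire.

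For the first assertion I would then note that $\Phi(x) = g(\ln x)$ on $(0,1]$. The principal branch of the logarithm is holomorphic on $\c \setminus (-\infty,0]$ and restricts to the usual logarithm on $(0,1]$, so $x \mapsto g(\ln x)$ is holomorphic on $\c \setminus (-\infty,0]$ and agrees with $\Phi$ on $(0,1]$; by the identity theorem it is the required extension, which I keep calling $\Phi$.

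For the second assertion I would use that $x \in (0,1]$ forces $1/x \in [1,\infty) \subset \c \setminus (-\infty,0]$, where $\ln(1/x) = -\ln x$, hence
\[
\Phi(1/x) = g(-\ln x) = \sum_{j=0}^\infty (-1)^j f^{(j)}(1)\,\frac{(\ln x)^j}{(j!)^2}.
\]
It then remains to identify the right-hand side with $U^* f_{-1}$. Since $f_{-1}$ is holomorphic near $1$ as well, Lemma \ref{inv.lem.30} applies to it and yields $U^* f_{-1}(x) = \sum_{j\ge 0} f_{-1}^{(j)}(1)\frac{(\ln x)^j}{(j!)^2}$; and because each differentiation of the reflected function contributes a sign, $f_{-1}^{(j)}(1) = (-1)^j f^{(j)}(1)$, which matches the displayed series term by term. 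This gives $U^* f_{-1}(x) = \Phi(1/x)$ for $x \in (0,1]$.

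I expect the only real obstacle to be the analyticity bookkeeping rather than any hard estimate. Two points need care: that the passage from the germ of $f$ at $1$ to the entire function $g$ is legitimate, which is exactly the content of the Cauchy-estimate step; and that Lemma \ref{inv.lem.30} may legitimately be applied to $f_{-1}$ at the point $1$, which is where one must check that the reflection fixes $1$ so that $f_{-1}^{(j)}(1) = (-1)^j f^{(j)}(1)$. A wrongly centered reflection would leave the coefficients evaluated at the wrong point and the identification would fail, so this is the step I would write out most carefully.
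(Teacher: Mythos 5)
Your proof of the first assertion is correct and is essentially the paper's own argument: you make $g(w)=\sum_{j\ge 0} f^{(j)}(1)\,w^j/(j!)^2$ entire (the paper invokes the Cauchy--Hadamard formula where you use Cauchy estimates, an immaterial difference) and then compose with the principal branch of the logarithm on $\c\setminus(-\infty,0]$.

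The second assertion is where your argument fails, and it fails at exactly the step you singled out as needing care. The reflection $z\mapsto -z$ fixes $0$, not $1$, so the chain rule gives
\[
f_{-1}^{(j)}(z)=(-1)^j f^{(j)}(-z),\qquad\text{hence}\qquad f_{-1}^{(j)}(1)=(-1)^j f^{(j)}(-1),
\]
not $(-1)^j f^{(j)}(1)$. For the same reason your sentence ``$f_{-1}$ is holomorphic near $1$ as well'' is unjustified: $f_{-1}$ is holomorphic near $1$ if and only if $f$ is holomorphic near $-1$, which the hypothesis does not supply, so Lemma \ref{inv.lem.30} cannot even be invoked for $f_{-1}$. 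This is not a repairable bookkeeping slip, because the identity you are trying to prove fails in this generality. Take $f(z)=z$: by Lemma \ref{inv.lem.20}, $U^*f=e_1=1+\ln x$, so $\Phi(1/x)=1-\ln x$ for $x\in(0,1]$; but $f_{-1}(z)=-z$ gives $U^*f_{-1}=-e_1=-1-\ln x$, which differs from $\Phi(1/x)$ by the constant $2$. With the correct identity, your computation produces $\sum_j (-1)^j f^{(j)}(-1)(\ln x)^j/(j!)^2$, i.e.\ the analogue of $\Phi(1/x)$ built from the derivatives of $f$ at $-1$, and no manipulation at this step can convert derivatives at $-1$ into derivatives at $1$. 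For the record, the paper's own proof of the second assertion rests on the same substitution $f_{-1}^{(k)}(1)\leadsto \pm f^{(k)}(1)$, so you have faithfully reproduced the intended argument; but judged on its mathematics, the step is invalid, and a correct statement would have to assume analyticity of $f$ near $-1$ and relate $U^*f_{-1}$ to the germ of $f$ at $-1$ rather than to $\Phi$.
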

\begin{proof}
Observe that as $f$ is assumed to be analytic on a neighborhood of 1, the Cauchy-Hadamard radius of convergence formula implies that $F$, defined by the formula,
\[
F(w) = \sum_{k=0}^\infty f^{(k)}(1)\frac{w^k}{(k!)^2},
\]
is an entire function. Consequently, as Lemma \ref{inv.lem.30} implies that
\[
\Phi(x)=U^*f(x) = F(\ln x)
\]
and $\ln x$ extends holomorphically to $\c \setminus (-\infty,0]$, so also, $\Phi$ extends holomorphically to $\c \setminus (-\infty,0]$.

To see the second assertion of the lemma, note using Lemma \ref{inv.lem.30} (with $f$ replaced with $f_{-1}$),
\begin{align*}
U^*f_{-1}(x)&= \sum_{k=0}^\infty f_{-1}^{(k)}(1)\frac{(\ln x)^k}{(k!)^2}\\ \\
&=\sum_{k=0}^\infty f^{(k)}(1)\frac{(\ln x)^k}{(k!)^2}
\end{align*}
\end{proof}
\section{Proofs and auxiliary results}
\label{secpfs}

\subsection{Convergence of Subspaces}
\label{subsecconsub}

\begin{prop}
\label{proppf1}
Let $\m_n$ be a sequence of closed subspaces of a Hilbert space $\h$.
The following are equivalent.

(i)  $\m_n \to \m$

(ii) $P_{\m_n} P_\m \to P_\m$ in the strong operator topology on $\b(\h)$, and there is no 
larger space $\n \supsetneq \m$ such that $P_{\m_n} P_\n \to P_\n$.

(iii)   $P_{\m_n} P_\m \to P_\m$ in the weak operator topology on $\b(\h)$, and  
larger space $\n \supsetneq \m$ such that $P_{\m_n} P_\n \to P_\n$.
\end{prop}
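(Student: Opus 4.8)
The plan is to reduce all three conditions to the single equality $\m = \widetilde{\m}$, where
\[
\widetilde{\m} := \set{f \in \h}{\lim_{n\to\infty}\dist(f,\m_n) = 0}
\]
is precisely the set appearing in Definition \ref{defin1}. Thus (i) is, by definition, the assertion $\m = \widetilde{\m}$, and it suffices to show that (ii) and (iii) each amount to the same equality. (I read the second clause of (iii) as ``there is no larger space $\n \supsetneq \m$ such that $P_{\m_n}P_\n \to P_\n$'' in the weak operator topology, matching the phrasing of (ii).)

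First I would record the algebraic identity that drives everything. For an orthogonal projection $P_{\m_n}$ and any $f \in \h$ one has $\ip{P_{\m_n}f}{f} = \norm{P_{\m_n}f}^2$, and hence, since $f - P_{\m_n}f \perp P_{\m_n}f$,
\[
\norm{f - P_{\m_n}f}^2 = \norm{f}^2 - \norm{P_{\m_n}f}^2 = \norm{f}^2 - \ip{P_{\m_n}f}{f}.
\]
For a fixed $f$ this shows that the weak convergence $P_{\m_n}f \to f$ is equivalent to the norm convergence $P_{\m_n}f \to f$: if $\ip{P_{\m_n}f}{f} \to \norm{f}^2$, the right-hand side tends to $0$, and the converse is trivial. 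This single observation is what collapses the distinction between the strong and weak operator topologies in (ii) and (iii), so I expect it to be the crux of the argument rather than a genuine obstacle.

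Next I would verify that $\widetilde{\m}$ is a closed subspace of $\h$. Linearity is immediate from the linearity of each $P_{\m_n}$, and closedness follows from the contraction estimate $\norm{P_{\m_n}(f-g)} \le \norm{f-g}$ via a routine $\varepsilon/3$ argument. With $\widetilde{\m}$ known to be closed, I would then translate the operator-convergence hypotheses into containment statements. For any closed subspace $\n$, since $P_{\m_n}P_\n g = P_{\m_n}(P_\n g)$ and $P_\n g$ ranges over all of $\n$ as $g$ ranges over $\h$, the condition $P_{\m_n}P_\n \to P_\n$ in the strong operator topology is equivalent to $P_{\m_n}f \to f$ in norm for every $f \in \n$, i.e.\ to $\n \subseteq \widetilde{\m}$; likewise the weak operator topology version is equivalent to $P_{\m_n}f \to f$ weakly for every $f \in \n$, which by the identity above is again $\n \subseteq \widetilde{\m}$.

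Finally I would assemble the pieces. The first clause of (ii) says $\m \subseteq \widetilde{\m}$, and the maximality clause says no closed subspace strictly larger than $\m$ lies inside $\widetilde{\m}$; since $\widetilde{\m}$ is itself a closed subspace containing $\m$, these two clauses together force $\m = \widetilde{\m}$, and conversely $\m = \widetilde{\m}$ plainly satisfies both. The identical reasoning, using the weak-topology translation from the previous step, shows that (iii) is also equivalent to $\m = \widetilde{\m}$. As (i) is $\m = \widetilde{\m}$ by definition, the three conditions coincide. The only points needing care are the interchange of weak and norm convergence for projections and the observation that the maximality clause is exactly what promotes the containment $\m \subseteq \widetilde{\m}$ to equality once $\widetilde{\m}$ is known to be closed.
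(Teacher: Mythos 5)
Your proof is correct, and it is organized differently from the paper's. The paper proves the cycle of implications directly: (i)$\Rightarrow$(ii) by noting $\|P_{\m_n}g-g\|=\dist(g,\m_n)\le\|g_n-g\|$ and, for maximality, picking $h\in\n\ominus\m$; (ii)$\Rightarrow$(i) by observing that if $h=\lim g_n$ with $g_n\in\m_n$ then the one-dimensional extension $\n=\m+\c h$ satisfies $P_{\m_n}P_\n\to P_\n$ SOT, forcing $h\in\m$; and (ii)$\Leftrightarrow$(iii) by appealing to the fact that sequences of projections converge WOT iff they converge SOT. You instead reduce all three conditions to the single normal form $\m=\widetilde{\m}$, at the cost of one extra lemma (that $\widetilde{\m}$ is a \emph{closed} subspace), which the paper's $\m+\c h$ trick is precisely designed to avoid. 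What your route buys: the logical structure becomes transparent, the two operator topologies are handled uniformly by one translation lemma ($P_{\m_n}P_\n\to P_\n$ iff $\n\subseteq\widetilde{\m}$, in either topology), and your vectorwise identity $\|f-P_{\m_n}f\|^2=\|f\|^2-\ip{P_{\m_n}f}{f}$ applies directly to the operators at hand — whereas the paper's cited projection lemma, read literally, concerns sequences of projections $Q_n\to Q$, and the operators $P_{\m_n}P_\m$ appearing in (ii) and (iii) are generally not projections, so the paper's argument needs exactly the small adaptation you supply. What the paper's route buys is brevity: it never needs closedness of the limit set. One cosmetic slip in your write-up: the closedness argument uses the contraction $\|(1-P_{\m_n})(f-g)\|\le\|f-g\|$ (or simply $\dist(f,\m_n)\le\|f-f_k\|+\dist(f_k,\m_n)$), not $\|P_{\m_n}(f-g)\|\le\|f-g\|$ as written; both are harmless since each operator is a contraction. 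Your reading of the typo in (iii) (``there is no larger space'') matches the paper's intent.
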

\bp
(i) $\Rightarrow$ (ii). Let $f \in \h$. Let $P_\m f = g$.
By (i), there exist $g_n \in \m_n$ so that $\| g_n - g \| \to 0$.
Therefore 
\[
\| P_{\m_n} g - P_\m g \| \ \leq \ \| g_n - g \| \to 0 ,
\]
so $P_{\m_n} P_\m \to P_\m$ SOT.

If a space $\n \supseteq \m$ existed for which $P_{\m_n} P_\n \to P_\n$, 
let $h \in \n \ominus \m$. Let $h_n = P_{\m_n} h$.
Then $h_n \in \m_n$ and $h_n \to h$. By (i), this means $h \in \m$, so $h = 0$ and
$\n = \m$.

(ii) $\Rightarrow$ (i).
If $f \in \m$, then $P_{\m_n} f \to f$, so $f \in \lim \m_n$.
If there were some $h = \lim g_n$ for a sequence $g_n \in \m_n$, then
$\n = \m + \c h$ would satisfy $P_{\m_n} P_\n \to P_\n$ SOT. So by (ii), this means $h \in \m$,
so $\m = \lim \m_n$.

(ii) $\Leftrightarrow$ (iii): This is because a sequence of projections in a Hilbert space converge WOT if and only if they converge SOT.
Indeed, suppose $Q_n \to Q$ WOT, and $Q$ and each $Q_n$ is a projection.
Then
\[
\| (Q - Q_n) f \|^2 \= \la Q f , f \ra - 2 \Re \la Q_n f, Qf \ra + \la Q_n f, f \ra \ \to 0 .
\]
\ep

\subsection{
Proof of Theorem \ref{thmbasis}}

\bp We have already shown that $e_n$ are an orthonormal basis. Clearly $J$ is unitary, so must be given by Theorem \ref{thmun} for some $c(s)$ and $\tau(s)$.
As $J 1 = 1$, we have $\tau(0) = 0$ and $c(0) = 1$.
As $J^2 = 1$, we have
$\tau(\tau (s)) = s$ and $c(\tau (s)) c(s) = 1$.

So $\tau$ is an automorphism of $\s$ that fixes $0$ and is period $2$.
Once we know $\tau'(0)$, this will uniquely determine $\tau$.
To calculate $\tau'(0)$, note that
\[
e_1(x) \= 1 +  \frac{\partial}{\partial s} x^s |_{s=0}.
\]
Therefore 
\beq
J e_1 (x) &\= & -1 + \frac{\partial}{\partial s} \left[ c(s) x^{\tau(s)} \right]_{s=0} \\
&=& 1 +  c^\prime(0) + c(0) \tau^\prime (0) \ln x .
\eeq
This yields $\tau'(0) = -1$, so \eqref{eqin21} hold.
\ep

\subsection{Proof of Corollary \ref{cormo}}
\label{subseccormo}

\begin{lemma}
\label{leman}
Suppose $T$ is a bounded monomial operator given by
\be
\label{eqdc1}
 T x^s \= c(s) x^{\tau(s)} .
 \ee
Then $\tau$ is a holomorphic function from $\s$ to $\s$, and $c$ is a holomorphic function on $\s$.
\end{lemma}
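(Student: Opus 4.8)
The plan is to read off $\tau$ and $c$ from the explicit action of $\widehat{T}$ on kernel functions supplied by Theorem \ref{thmbmo}. By that theorem, since $T$ is a bounded monomial operator, $\widehat{T^*} = M_g C_\beta$ for some holomorphic $\beta : \d \to \d$ and some $g \in \htwo$; consequently $\widehat{T} = (M_g C_\beta)^*$ acts on kernels by $\widehat{T} k_\alpha = \overline{g(\alpha)}\, k_{\beta(\alpha)}$. On the other hand, Lemma \ref{sa.lem.10} gives $U x^s = \tfrac{1}{s+1} k_{\alpha(s)}$ with $\alpha(s) = \tfrac{\bar s}{\bar s + 1} = \overline{w(s)}$, where $w(s) = \tfrac{s}{s+1}$ is a biholomorphism of $\s$ onto $\d$ with inverse $w^{-1}(\zeta) = \tfrac{\zeta}{1-\zeta}$. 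First I would apply $U$ to the defining relation $T x^s = c(s) x^{\tau(s)}$ and use Lemma \ref{sa.lem.10} on the right, obtaining for every $s \in \s$ the identity
\[
\frac{1}{s+1}\,\overline{g(\alpha(s))}\, k_{\beta(\alpha(s))} \= \frac{c(s)}{\tau(s)+1}\, k_{\overline{w(\tau(s))}} .
\]

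Next I would exploit that distinct Szeg\H{o} kernels are linearly independent and never vanish: wherever the scalar $g(\alpha(s))$ is nonzero, both sides are nonzero multiples of a single kernel, so base points and coefficients must agree. Matching base points gives $\overline{w(\tau(s))} = \beta(\alpha(s))$, and taking conjugates yields $w(\tau(s)) = \beta^{\#}(w(s))$, where $\beta^{\#}(\zeta) := \overline{\beta(\bar\zeta)}$ is again a holomorphic self-map of $\d$. Inverting $w$ then gives the explicit formula
\[
\tau(s) \= w^{-1}\!\big(\beta^{\#}(w(s))\big) \= \frac{\beta^{\#}(w(s))}{1 - \beta^{\#}(w(s))},
\]
manifestly a holomorphic map of $\s$ into $\s$. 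Matching coefficients gives $c(s) = \tfrac{\tau(s)+1}{s+1}\, g^{\#}(w(s))$, with $g^{\#}(\zeta) := \overline{g(\bar\zeta)} \in \htwo$, so $c$ is holomorphic as well. Verifying $\alpha(s) = \overline{w(s)}$ and the conjugation identities is routine.

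The main obstacle is that this matching is only licensed where $g(\alpha(s)) \neq 0$. If $g \equiv 0$ then $T = 0$ and there is nothing to prove, so assume $g \not\equiv 0$; then $s \mapsto g^{\#}(w(s))$ is holomorphic and not identically zero, so its zero set $Z$ is discrete. Off $Z$ the two displayed formulas define holomorphic $\tau_0, c_0$ agreeing with the given $\tau, c$. At $s_0 \in Z$ the identity forces $c(s_0)/(\tau(s_0)+1) = 0$, hence $c(s_0) = 0 = c_0(s_0)$ since $\tau(s_0) \in \s$ precludes $\tau(s_0) = -1$; thus $c \equiv c_0$ on all of $\s$ and $c$ is holomorphic without exception. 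For $\tau$ the subtlety is genuine: at $s_0 \in Z$ we have $T x^{s_0} = 0$, so the defining relation constrains $\tau(s_0)$ not at all. I would therefore phrase the conclusion as: $\tau$ coincides on $\s \setminus Z$ with the holomorphic $\tau_0$, and replacing $\tau$ by $\tau_0$ on the discrete set $Z$ leaves $T$ unchanged, so with this (unique) holomorphic representative $\tau : \s \to \s$ is holomorphic. A Sarason-free route giving the same formulas and the same exceptional set is to note that $s \mapsto x^s$ is a holomorphic $\ltwo$-valued function, so each moment $F_n(s) = \ip{Tx^s}{x^n}_{\ltwo} = \tfrac{c(s)}{\tau(s)+n+1}$ is holomorphic on $\s$; solving the cases $n = 0, 1$ yields $\tau = \tfrac{2F_1 - F_0}{F_0 - F_1}$ and $c = \tfrac{F_0 F_1}{F_0 - F_1}$, meromorphic on $\s$ and holomorphic off $\{c = 0\}$, which one reconciles with the above exactly as before.
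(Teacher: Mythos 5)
Your argument is correct, but your primary route is genuinely different from the paper's. The paper stays inside $\ltwo$: since $s \mapsto x^s$ is a holomorphic $\ltwo$-valued map, the functions $F_t(s) = \ip{Tx^s}{x^t} = c(s)/(1+\tau(s)+\bar t)$ are holomorphic for $t=0,1$; the quotient $F_0/F_1$ exhibits $\tau$ as meromorphic, poles are excluded because near a pole $\tau$ would take values outside $\s$, and then $c = (1+\tau)F_0$ is holomorphic. This is essentially the ``Sarason-free route'' you sketch in your final paragraph, so your closing remark reproduces the paper's proof, while your main argument instead imports Theorem \ref{thmbmo} and matches Szeg\H{o} kernels under the Sarason transform. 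What your main route buys: explicit global formulas $\tau = w^{-1}\circ \beta^{\#}\circ w$ and $c(s) = \frac{\tau(s)+1}{s+1}\, g^{\#}(w(s))$, which show directly that $\tau$ maps $\s$ holomorphically into $\s$, with no meromorphy or pole-exclusion step. What it costs: it rests on the much heavier cited Theorem \ref{thmbmo}, whose proof in \cite{amMO} very likely establishes the holomorphy of $\tau$ and $c$ en route (the dictionary between $(c,\tau)$ and $(g,\beta)$ is exactly your kernel matching); within this paper that is formally legitimate, since Theorem \ref{thmbmo} is quoted as an external result and Lemma \ref{leman} is used only afterward, but it makes the lemma a corollary of a stronger theorem rather than a self-contained fact, and it runs against the paper's stated preference for arguments intrinsic to $\ltwo$. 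One point where you are more careful than the paper: you treat the zero set $Z$ of $c$ explicitly, noting that at $s_0 \in Z$ the relation $Tx^{s_0}=0$ leaves $\tau(s_0)$ unconstrained, so the conclusion must be read as ``$\tau$ admits a (unique) holomorphic representative''; the paper's quotient identity likewise holds only off $Z$, and its proof glosses over this.
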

\bp
The map $s \mapsto x^s$ is a holomorphic map from $\s$ to $\ltwo$. Therefore, for each $t \in \s$,
the map
\[
s \ \mapsto \ \la x^s, T^* x^t \ra \ = \ \frac{c(s)}{1 + \tau(s) + \bar t} 
\]
is holomorphic. Letting $t=0$ and $1$ and taking the quotient, we get
\[
\frac{2 + \tau(s) }{1 + \tau (s) } \= 1 + \frac{1}{1 + \tau(s)}
\]
is a meromorphic function of $s$. Hence $\tau$ is meromorphic in $\s$.
Moreover, $\tau$ cannot have a pole, since otherwise in a neighborhood of this pole it would
take on all values in a neighborhood of $\infty$, including ones not in $\s$.
Therefore $\tau$ is holomorphic, and consequently so is $c(s)$ since we have
\[
c(s) \= (1 + \tau(s) ) \la x^s, T^* 1 \ra .
\]
\ep

\bp (Of Corollary \ref{cormo}).
If $T$ maps ${\rm Lat(H)}$ to ${\rm Lat}(H)$, it must be a monomial operator, since each monomial functions spans a one-dimensional $H$-invariant subspace.

Conversely, suppose $T$ is a monomial operator given by \eqref{eqdc1}, and $\m \in {\rm Lat }(H)$.
 By Lemma \ref{leman}, the function $\tau$ is a holomorphic map from $\s$ to $\s$.
 Define the function $\phi \in H^\infty(\D (1,1))$ by 
 \[
 \phi(z) \= \frac{1}{1 + \tau( \frac{1-z}{z}) } .
 \]
 Then for every $s \in \s$ we have
 \[
 \phi(\frac{1}{1+s}) \= \frac{1}{1 + \tau(s)}.
 \]
 Therefore $HT = T \phi(H)$, since they agree on all monomials, 
 so $H T \m = T \phi(H) \m \subseteq T \m$, as required.
\ep

\begin{rem}
 We used the fact that if $\phi \in H^\infty(\D (1,1))$ then $\phi(H)$ is a bounded operator.
We define $\phi(H)$ to be the monomial operator
\[
\phi(H) : x^s \ \mapsto \  \phi(\frac{1}{1+s})  x^s .
\]
This will be bounded by $M$ if and only if $M^2 - \phi(H)^* \phi(H) \geq 0$, which is equivalent to
\be
\label{eqdc2}
\frac{M^2 - \phi(\frac{1}{1+s})  \overline{\phi(\frac{1}{1+t})} }{1 + s + \bar t} \ \geq \ 0 .
\ee
The fact that \eqref{eqdc2} is equivalent to the assertion that $\phi$ has norm at most $M$
in $H^\infty(\D (1,1))$ is, after a change of variables, the content of Pick's theorem \cite{pi15}.
\end{rem}

\subsection{Proof of Theorem \ref{thmsing1}}

\bp
Let $T$ be a unitary monomial operator, given by 
\be
\label{eqdd1}
 T x^s \= c(s) x^{\tau(s)} .
 \ee
  By Theorem \ref{thmun}, we know that $\tau$ is a holomorphic automorphism of $\s$.
It is well-known that holomorphic automorphisms of the upper half plane are given by linear fractional 
transformations with coefficients from $SL(2,\r)$. So any holomorphic automorphism of $\s$ is of the form
\be
\label{eqaut}
\tau(s) \= \frac{ A (s + \frac 12) - iB}{iC(s + \frac12) + D }- \frac{1}{2} ,
\ee
where
$ \begin{pmatrix} A&B\\C&D \end{pmatrix}$ is in $SL(2,\r)$.
Let $\sigma(s) = \frac{-s}{1+2s}$.
 
 Case (i): $\tau(\i) = \i$. Then $\tau$ is of the form
 $\tau(s) = \alpha s + \beta + i \gamma$, where $\alpha > 0$, $\beta, \gamma \in \r$, and
 $\beta = \frac{\alpha -1}{2}$.
 As $T$ is given by \eqref{eqdd1} and
 \[
 c(s) \= c_0 \frac{1 + \bar \beta + \alpha s + \beta}{1+s} \= c_0 \alpha ,
 \]
 we have
 \[
 T: x^s \ \mapsto \ c_0 \alpha  x^\beta x^{\alpha s}.
 \]
 Therefore
 \[
 T : f(x) \ \mapsto \ c_0 \alpha x^\beta f(x^\alpha) .
 \]
 Therefore
 \[
 T \ L^2( [\rho,1]) \= L^2 [\rho^{\frac{1}{\alpha}}, 1] \= \A_{1, \frac{1}{2\alpha} \log \frac{1}{\rho}} .
 \]
 \vskip 5pt
 Case (ii): $\tau(\i) = - \frac{1}{2}$. Then $\sigma \circ \tau (\i) = \i$.
 So by Case (i), we have
 \[
 J T \ L^2( [\rho,1]) \= A_{1,w}
 \]
 for some $w$. Therefore
 \[
 T \ L^2( [\rho,1]) \= A_{-1,w}.
 \]
 
 \vskip 5pt
 Case (iii) $\tau(\i) = - \frac{1}{2} + i \delta$.
 Then \[
 M_{x^{-i\delta}} T : x^s \mapsto \tilde{c}(s) x^{\tilde \tau (s)} ,
 \]
 where $\tilde \tau(\i) = - \frac12$.
 By Case (ii), we have
 \[
 T \ L^2( [\rho,1]) \=  M_{x^{i\delta}}  \A_{-1,\wp} \= \A_{\tau,w} ,
 \]
 for $\tau = \frac{2i\delta  + 1}{2i \delta - 1}$ and $w = \frac{\wp}{1 + 4\delta^2}$.
 
  \vskip 5pt
 Uniqueness of representation: We need to show that if 
 $\A_{\tau,w} = \A_{\tau',w'}$, then $\tau' = \tau$ and $w' = w$.
 %Let $\phi_{\tau,w}$ be the projection of $1$ onto $\A_{\tau,w}$.
 Observe that if $U$ is a unitary, and $P_\m f = g$, then $P_{U\m} Uf = U g$.
 
 Let us calculate $P_{\A_{\tau,w}} x^s$.
 If $\tau = 1$, then
 \[
 P_{\A_{1,w}} x^s \= \chi_{[e^{-2w}, 1]} x^s , 
 \]
 and
 \be
 \label{eqdd2}
 \|  P_{\A_{1,w}} x^s \|^2 \= \frac{1}{1 + 2 \Re s} \left[ 1 - e^{-2w(1 + 2 \Re s)} \right].
 \ee
 Otherwise, $\tau = \frac{2i\delta +1}{2i\delta - 1}$ for some $\delta \in \r$.
 Then 
 \[
  P_{\A_{\tau,w} } x^{s} \=  M_{x^{i\delta}} J P_{\A_{1,\wp}} J x^{s - i \delta} .
  \]
  From \eqref{eqin21},
  \[
  J x^{s - i \delta} \= \frac{1}{ 1 +2s - 2i \delta } x^{ \frac{-s + i \delta}{1 + 2s - 2i \delta}} .
  \]
  Therefore
  \beq
  \|   P_{\A_{\tau,w} } x^{s}\|^2 &\=& \|  P_{\A_{1,\wp}} J x^{s - i \delta}  \|^2 \\
  &=&  \frac{1}{ |1 +2s - 2i \delta|^2 } \int_{e^{-2\wp}}^1  x^{ 2 \Re \frac{-s + i \delta}{1 + 2s - 2i \delta}}\  dx
  \eeq
  When $s = u+iv$ , this gives
  \be
  \label{eqdd3}
    \|   P_{\A_{\tau,w} } x^{s}\|^2 \=
    \frac{1}{1+2u} \left[ 1 - e^{-2\wp \frac{1+2u}{(1+2u)^2 + 4 (\delta - v)^2}} \right].
    \ee
Comparing \eqref{eqdd2} and \eqref{eqdd3}, we see that $\tau$ and $w$ are completely determined
by
$    \|   P_{\A_{\tau,w} } x^{s}\|^2 $.
 \ep

\section{Proofs using Hardy space theory}
\label{sechar}

Although Theorems \ref{thmdecomp} and \ref{thmsing2} are stated without using the language of Hardy spaces, the authors do not know how to prove them directly.

\subsection{Proof of \ref{thmdecomp}}

\bp
Let $\m$ be in ${\rm Lat}(H)$. Define a sequence $S \subset \s$ by
$S = \{ s : \la f, x^s \ra = 0 \ \forall\  f \in \m \}$.
The number $s$ will occur in $S$ with multiplicity $m$ where $m$ is the largest number so that
$\m \perp \{ x^s, (\ln x) x^s, \dots, (\ln x)^{m-1} x^s \}$.
Let $\m_0 = \m(S)$.

To see that $\m = \overline{ \m_0 + \m_1}$ for some singular space $\m_1$, we use the Sarason transform to move to $H^2$.
Then $\m$ becomes $(BS H^2)^\perp$, where $B$ is a Blaschke product and $S$ is a singular inner function. 
As $BS H^2 = B H^2 \cap S H^2$, we have
\[
(BS H^2)^\perp \= \overline{ (B H^2)^\perp + (S H^2)^\perp },
\]
and $\m_1$ is the inverse Sarason transform of $(S H^2)^\perp$.
\ep

\subsection{Proof of \ref{thmsing2}}

Let $S_{\tau, w}$ denote the singular inner function
\[
S_{\tau, w} (z) \= \exp \left( - w \frac{\tau +z}{\tau-z} \right) .
\]
 Let $U : L^2 \to H^2$ be the Sarason transform. By Lemma \ref{lemsin} we have that
\[
 U \A_{1, w} U^* = \left( S_{1, w} H^2 \right)^\perp.
\]
We wish to extend this to other values of $\tau$.
\begin{lemma}
\label{lemsin1}
\be
\label{eqsin2}
 U M_{x^{ic}} U^* (S_{-1,\wp}) \= F S_{\tau, w} ,
 \ee
 where $\tau = \frac{2ic + 1}{2ic - 1}$, $w = \frac{1}{1+4c^2} \wp$ and $F(z) = \exp (-2icw )\frac{1}{1+ic -ic z} $.
\end{lemma}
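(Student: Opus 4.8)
The plan is to compute the conjugated operator $\widehat{M_{x^{ic}}} := U M_{x^{ic}} U^*$ on $\htwo$ in closed form as a weighted composition operator, and then merely evaluate it on the single function $S_{-1,\wp}$. Using the formula $U^* k_z = \frac{1}{1-\bar z}\, x^{\bar z/(1-\bar z)}$ from Lemma \ref{sa.lem.10} together with the reproducing property of $k_z$, one gets, for $F \in \htwo$ and $f = U^* F$,
\[
\widehat{M_{x^{ic}}} F(z) \= \la M_{x^{ic}} f,\, U^* k_z \ra_{\ltwo} \= \frac{1}{1-z} \int_0^1 f(x)\, x^{\,ic + \frac{z}{1-z}}\, dx .
\]
The exponent $ic + \frac{z}{1-z}$ equals $\frac{\zeta}{1-\zeta}$ at the point $\zeta = \beta(z) := \frac{(1-ic)z + ic}{1+ic-icz}$, so the integral formula for $U$ rewrites the right-hand side as $(1-\beta(z))\,F(\beta(z))$, and since $1-\beta(z) = \frac{1-z}{1+ic-icz}$ this collapses to
\[
\widehat{M_{x^{ic}}} F(z) \= \frac{1}{1+ic-icz}\, F(\beta(z)) .
\]
Taking $F=1$ gives $\widehat{M_{x^{ic}}}1 = \frac{1}{1+ic-icz}$, which agrees with $U(x^{ic})$ computed directly, a useful check of both the weight and the self-map $\beta$.

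Next I would substitute $F = S_{-1,\wp}$, where $S_{-1,\wp}(u) = \exp(-\wp\,\frac{1-u}{1+u})$, so everything reduces to simplifying the Möbius combination $\frac{1-\beta(z)}{1+\beta(z)}$. A direct computation gives
\[
\frac{1-\beta(z)}{1+\beta(z)} \= \frac{1-z}{(1-2ic)z + (1+2ic)} .
\]
Its denominator vanishes exactly at $z = \tau$ with $\tau = \frac{2ic+1}{2ic-1}$, which lies on $\t$ because $|2ic+1| = |2ic-1|$; hence this rational function has the same single pole as $\frac{\tau+z}{\tau-z}$, and the stated $\tau$ is forced.

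Then I would carry out the partial-fraction decomposition, writing the function above as $A + B\,\frac{\tau+z}{\tau-z}$. Matching the residue at $z=\tau$ (using $(1-2ic)(1+2ic)=1+4c^2$) gives $B = \frac{1}{1+4c^2}$, hence $w = \wp B = \frac{\wp}{1+4c^2}$; the behaviour as $z\to\infty$ then forces $A = \frac{-2ic}{1+4c^2}$. Exponentiating, $\exp\!\big(-\wp(A + B\tfrac{\tau+z}{\tau-z})\big) = \exp(-\wp A)\,S_{\tau,w}(z)$, and multiplying by the prefactor $\frac{1}{1+ic-icz}$ gives $\widehat{M_{x^{ic}}} S_{-1,\wp}(z) = \frac{\exp(-\wp A)}{1+ic-icz}\,S_{\tau,w}(z)$, which is of the form $F\,S_{\tau,w}$ with the stated $\tau$ and $w$.

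The main obstacle is pinning down the scalar prefactor $F$: the self-map $\beta$ and the value of $w$ fall out cleanly, but the additive constant $A$ governing the exponential factor in $F$ is easy to get wrong by a sign. The safest way to fix it is to evaluate both sides at $z=0$: there $S_{-1,\wp}(\beta(0)) = \exp(-\wp/(1+2ic))$ and $S_{\tau,w}(0) = e^{-w}$, and the identity $w(1-2ic) = \wp/(1+2ic)$ shows the two sides match precisely when $\exp(-\wp A) = \exp(2icw)$. This evaluation determines the exponential factor unambiguously and guards against a sign slip in the partial-fraction bookkeeping.
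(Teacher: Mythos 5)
Your proof is correct, and it follows essentially the same route as the paper's: the paper also begins by identifying $U M_{x^{ic}} U^*$ as the weighted composition operator $M_\phi C_\psi$ with $\phi(z) = \frac{1}{1+ic-icz}$ and $\psi(z) = \frac{(1-ic)z+ic}{1+ic-icz}$ (it gets this by computing the action of $\widehat{M_{x^{-ic}}}$ on kernel functions and taking adjoints, where you use the integral formula for $U$; the two computations are equivalent), and then reduces the lemma to the same M\"obius identity
\[
\frac{1-\psi(z)}{1+\psi(z)} \ = \ \frac{1}{1+4c^2}\,\frac{\tau+z}{\tau-z} \ - \ \frac{2ic}{1+4c^2},
\]
which is precisely your partial-fraction step with $B = \frac{1}{1+4c^2}$ and $A = \frac{-2ic}{1+4c^2}$.

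One point deserves emphasis: your bookkeeping of the scalar is more careful than the paper's, and it shows that the constant in the printed statement is off by a sign. You obtain $F(z) = \exp(+2icw)\frac{1}{1+ic-icz}$, and you are right. Indeed, at $z=0$ the left-hand side is $\phi(0)\,S_{-1,\wp}(\psi(0)) = \frac{1}{1+ic}\exp\bigl(-\frac{\wp}{1+2ic}\bigr) = \frac{1}{1+ic}\,e^{-w}e^{2icw}$, using $\frac{\wp}{1+2ic} = w(1-2ic)$, whereas the stated $F$ would give $\frac{1}{1+ic}\,e^{-w}e^{-2icw}$; these agree only when $c=0$. The paper's own proof never pins the constant down: its displayed formula $C_\psi S_{-1,\wp}(z) = \exp\bigl(\wp\,\frac{-1+\psi(z)}{-1-\psi(z)}\bigr)$ drops the minus sign coming from the definition of $S_{-1,\wp}$, and its conclusion is only that $C_\psi S_{-1,\wp}$ is ``a unimodular constant times $S_{\tau,w}$.'' So your argument in effect proves a corrected version of the statement. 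The discrepancy is harmless downstream: in the proof of Lemma \ref{lemsin2}, $F$ enters only through the fact that it is outer, which is unaffected by a unimodular constant.
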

\bp
Observe first that $\widehat{M_{x^{-ic}} }$ is a unitary operator that takes $k_\alpha$ to
$\overline{\phi (\alpha)} k_{\psi(\alpha)}$, where
\beq
\phi(z) &\=& \frac{1}{1+ic - ic z} \\
\psi(z) &\=& \frac{(1-ic) z+ ic }{ 1+ic  -icz} .
\eeq

Therefore
\[
\widehat{M_{x^{-ic}} } \= C_\psi^* M_\phi^* ,
\]
and so
\be
\label{eqsin4}
\widehat{M_{x^{ic}} } \=  M_\phi C_\psi.
\ee
We have
\[
C_\psi S_{-1,\wp} (z) \= \exp \left( \wp\ \frac{-1 + \psi(z)}{-1 - \psi(z)} \right).
\]

A calculation shows that
\[
 \frac{-1 + \psi(z)}{-1 - \psi(z)}
\=
\frac{1}{1+4c^2} \ \frac{\tau + z}{\tau - z} - \frac{2ic}{1+4c^2} .
\]
Therefore $C_\psi S_{-1,\wp} (z)$ is a unimodular constant times $S_{\tau,w}$, and  \eqref{eqsin2} holds.
\ep
\begin{lemma}
\label{lemsin2}
\be
\label{eqsin3}
 U \A_{\tau, w} U^* = \left( S_{\tau, w} H^2 \right)^\perp.
\ee
\end{lemma}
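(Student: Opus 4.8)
The plan is to prove Lemma \ref{lemsin2} by assembling the three definitional cases for $\A_{\tau,w}$ with the transform computations already established. The statement \eqref{eqsin3} asserts that the Sarason transform $U$ carries the atomic space $\A_{\tau,w}$ onto the model space $(S_{\tau,w}H^2)^\perp$, so the natural strategy is to trace through the construction of $\A_{\tau,w}$ in the three cases $\tau = 1$, $\tau = -1$, and $\tau \in \t \setminus \{\pm 1\}$, applying at each stage what we know about how $U$ conjugates the building-block operators.

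First I would dispose of the case $\tau = 1$, which is precisely the content of Lemma \ref{lemsin} (via Lemma \ref{lemchi}): since $\A_{1,w} = L^2([e^{-2w},1]) = \ran P^+_{e^{-2w}}$, and $U \ran P^+_s = \Phi_s (H^2)^\perp$, a direct comparison of $\Phi_{e^{-2w}}$ with $S_{1,w}$ shows $U\A_{1,w}U^* = (S_{1,w}H^2)^\perp$, establishing \eqref{eqsin3} when $\tau = 1$. Next, for $\tau = -1$, I would use $\A_{-1,w} = J\,\A_{1,w}$ together with the fact from Theorem \ref{thmbasis} that $\widehat J$ acts as $z \mapsto -z$ on $H^2$ (i.e. $\widehat J = C_{-1}$); applying this composition to $S_{1,w}$ and checking it agrees with $S_{-1,w}$ up to a unimodular constant factor converts the $\tau=1$ result into the $\tau=-1$ result. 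The constant factor is harmless because multiplying an inner function by a unimodular constant does not change the model space it generates.

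The general case $\tau \in \t \setminus \{1\}$ is then handled by the definition $\A_{\tau,w} = M_{x^{ic}}\A_{-1,\wp}$ combined with Lemma \ref{lemsin1}. Specifically, I would write
\[
U\A_{\tau,w}U^* \= \widehat{M_{x^{ic}}}\, U\A_{-1,\wp}U^* \= \widehat{M_{x^{ic}}}\,\bigl(S_{-1,\wp}H^2\bigr)^\perp,
\]
using the $\tau=-1$ case just proved with $\wp$ in place of $w$. Then Lemma \ref{lemsin1} supplies $\widehat{M_{x^{ic}}}\,S_{-1,\wp} = F\,S_{\tau,w}$, and since $\widehat{M_{x^{ic}}}$ is unitary it carries the orthocomplement of $S_{-1,\wp}H^2$ onto the orthocomplement of its image; the outer factor $F$ is a bounded invertible analytic factor, so $FS_{\tau,w}H^2 = S_{\tau,w}H^2$ as closed subspaces, yielding $(S_{\tau,w}H^2)^\perp$. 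The bookkeeping must confirm that the parameters $\tau = \frac{2ic+1}{2ic-1}$ and $w = \frac{\wp}{1+4c^2}$ matching those in Lemma \ref{lemsin1} are exactly the ones appearing in the definition of $\A_{\tau,w}$, which they are by construction.

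The main obstacle I anticipate is the subtle point in the last case that multiplication by the outer function $F$ leaves the model space invariant: one needs that $FS_{\tau,w}H^2$ and $S_{\tau,w}H^2$ coincide as subspaces even though $F$ is not inner. This follows because $F$ is an invertible multiplier on $H^2$ (it is bounded, nonvanishing on $\overline{\d}$ near the relevant points, and its reciprocal is also bounded analytic), so $M_F$ is a bounded operator with bounded inverse, hence $M_F(S_{\tau,w}H^2)$ has the same closure as $S_{\tau,w}H^2$; equivalently, since $\widehat{M_{x^{ic}}}$ is genuinely unitary, it suffices to note that it maps the cyclic invariant subspace generated by $S_{-1,\wp}$ onto that generated by $FS_{\tau,w} = \widehat{M_{x^{ic}}}S_{-1,\wp}$, and the latter generates the same shift-invariant subspace as $S_{\tau,w}$ because $F$ is outer. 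I would state this cleanly rather than belabor the verification.
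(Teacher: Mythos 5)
Your three-case outline ($\tau=1$ via Lemmas \ref{lemchi} and \ref{lemsin}, $\tau=-1$ via $\widehat{J}f(z)=f(-z)$ from Theorem \ref{thmbasis}, general $\tau$ via $\widehat{M_{x^{ic}}}$ and Lemma \ref{lemsin1}, with the outer factor absorbed into $H^2$) is exactly the paper's proof, and the first two cases are fine as you give them.

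In the third case, however, the crucial step is not justified by what you say. Lemma \ref{lemsin1} gives you the image of the \emph{single function} $S_{-1,\wp}$, and you need the image of the whole subspace $S_{-1,\wp}H^2$; you bridge this by asserting that, ``since $\widehat{M_{x^{ic}}}$ is genuinely unitary,'' it maps the shift-invariant subspace generated by $S_{-1,\wp}$ onto the shift-invariant subspace generated by $\widehat{M_{x^{ic}}}S_{-1,\wp}$. That principle is false for general unitaries: a unitary need not carry shift-invariant subspaces to shift-invariant subspaces at all. For instance, the unitary that swaps $1$ and $z$ and fixes $z^n$ for $n\ge 2$ maps $zH^2$ onto $\overline{\spn}\{1,z^2,z^3,\ldots\}$, which is not shift-invariant, while the cyclic subspace generated by the image of the generator is all of $H^2$. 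What makes the step true here is the specific structure \eqref{eqsin4}: $\widehat{M_{x^{ic}}}=M_\phi C_\psi$ with $\psi$ an automorphism of $\D$, so that $\widehat{M_{x^{ic}}}(S_{-1,\wp}h)=F S_{\tau,w}\,(h\circ\psi)$, and since $C_\psi$ maps $H^2$ onto $H^2$, the image of $S_{-1,\wp}H^2$ is exactly $FS_{\tau,w}H^2$. This is precisely how the paper argues: it writes the image as $\{\phi(z)F(z)S_{\tau,w}(z)h(\psi(z)):h\in H^2\}$ and then uses that $\phi$ and $F$ are outer and $\psi$ is an automorphism. Once that identification is made, your observation that $M_F$ is a bounded multiplier with bounded inverse, so that $FS_{\tau,w}H^2=S_{\tau,w}H^2$, correctly finishes the argument; so the repair needs only \eqref{eqsin4}, which is already contained in the proof of Lemma \ref{lemsin1} that you invoke, rather than unitarity.
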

\bp
We have already proved the case $\tau =1$, so assume $\tau \neq 1$.
Consider next $\tau = -1$.
Then
\beq
U \A_{-1, w} U^* &\=& U J \A_{1,w} U^* \\
 &=& UJ U^* U \A_{1,w} U^* \\
 &=&UJU^* \left( S_{1, w} H^2 \right)^\perp.
\eeq
As $UJU^* f(z) = f(-z)$, we  have
\[
UJU^* \left( S_{1, w} H^2 \right) \=  \left( S_{-1, w} H^2 \right),
\]
so
\[
UJU^* \left( S_{1, w} H^2 \right)^\perp \=  \left( S_{-1, w} H^2 \right)^\perp .
\]
For $\tau \neq \pm 1$, we have, with $\phi$ and $\psi$ as in \eqref{eqsin4} and $F$ as in Lemma \ref{lemsin1},
\beq
 U \A_{\tau, w}^\perp U^* &\=& U M_{x^{ic}} J \A_{1,\wp}^\perp U^* \\
 &=&U M_{x^{ic}} U^* \left( S_{-1, \wp} H^2 \right)\\
 &\=& \{ \phi(z) F(z) S_{\tau,w} (z) h(\psi(z) ) : h \in H^2 \}.
\eeq
As $F$ and $\phi$ are outer and $\psi$ is an automorphism of $\D$, this proves that
\[
U \A_{\tau, w}^\perp U^* \=  S_{\tau, w} H^2 , 
\]
and hence \eqref{eqsin3}.
\ep
\bp (Of Theorem \ref{thmsing2}.)
From Lemma \ref{lemsin2}, we have, for distinct points $\tau_k$,
\beq
U \ \m(\sum_{k=1}^n w_k \delta_{\tau_k})\ U^*
&\=&
\sum_{k=1}^n \left( S_{\tau_k, w_k} H^2 \right)^\perp \\
&=& \left( (\prod_{k=1}^n S_{\tau_k, w_k})\ H^2 \right)^\perp.
\eeq
Suppose that $\mu_n \to \mu$ weak-*, where $\mu$ and each $\mu_n$ are singular.
Define singular inner functions by
\beq
\varphi_n(z) &\=& \exp \left[ - \int \frac{e^{i \theta} + z}{e^{i \theta} - z} d \mu_n (\theta)\right] \\
\varphi(z) &\=& \exp \left[ - \int \frac{e^{i \theta} + z}{e^{i \theta} - z} d \mu (\theta) \right].
\eeq
Then $\| \varphi_n - \varphi \|_{H^2} \to 0$. Indeed, $\varphi_n $ tends to $\varphi$ weakly in $H^2$, since the
functions all have norm $1$ and converge pointwise on $\D$. Therefore
\beq
\| \varphi_n - \varphi \|^2 &\=& 2 - 2 \Re \la \varphi_n , \varphi \ra \\
& \to & 0.
\eeq
This means that not only do the Toeplitz operators $T_{\overline{\varphi}_n}$ converge to $T_{\overline{\varphi}}$ in the strong operator topology, but $T_{{\varphi_n}}T_{\overline{\varphi}_n}$
converges to $T_\varphi T_{\bar \varphi}$ SOT.
This is proved in \cite[p. 34]{nik}; for the convenience of the reader, we include the proof.
Let $f \in H^2$. Then
\beq
\| T_{{\varphi_n}}T_{\bar{\varphi}_n} f - T_{{\varphi}}T_{\overline{\varphi}}f \| & \ \leq \ &
	\| T_{{\varphi_n}}(T_{\overline{\varphi}_n} - T_{\overline{\varphi}} ) f \| + 
\| (T_{{\varphi_n}} - T_\varphi ) T_{\overline{\varphi}} ) f \| \\
&\leq& \sup_n \| \varphi_n \|_{H^\i} \| ( T_{\overline{\varphi}_n} - T_{\overline{\varphi}} ) f \| +
\left( \int |\varphi_n - \varphi|^2 |T_{\bar \varphi} f |^2 \right)^{\frac 12}.
\eeq
The first term tends to zero because $T_{\overline{\varphi}_n}$ tends to $T_{\bar \varphi}$ in the SOT, and the second term tends to $0$ because $\varphi_n $ tends to $\varphi$ in measure and $|\varphi_n - \varphi| \leq 2$.
As $T_{{\varphi_n}}T_{\overline{\varphi}_n}$ is the projection onto $(\varphi_n H^2)^\perp$, this means by
Proposition \ref{proppf1} that the spaces $(\varphi_n H^2)^\perp$ converge to $(\varphi H^2)^\perp$. Applying the inverse Sarason transform, we conclude that $\m(\mu_n)$ converges to $\m(\mu)$.
\ep

\bibliography{../references_uniform_partial}
\end{document}